\documentclass{amsart}
\usepackage[final]{epsfig}
\usepackage{graphics}
\usepackage{float}
\usepackage{amsmath}
\usepackage{amsfonts}
\usepackage{latexsym}
\usepackage{amssymb}
\usepackage{graphicx}
\usepackage{url}
\usepackage{epstopdf}
\usepackage{verbatim}
\usepackage[margin=1in]{geometry}
\usepackage{amsthm}
\usepackage{tikz}
\usepackage{caption}
\usepackage{tikz-cd}
\usepackage{enumitem}

\makeatletter
\newtheorem*{rep@theorem}{\rep@title}

\definecolor{orange}{rgb}{1,0.5,0}

\newcommand{\newreptheorem}[2]{%
\newenvironment{rep#1}[1]{%
 \def\rep@title{#2 \ref{##1}}%
 \begin{rep@theorem}}%
 {\end{rep@theorem}}}
\makeatother

\newtheorem{lemma}{Lemma}[section]
\newtheorem{proposition}[lemma]{Proposition}

\newtheorem{remark}[lemma]{Remark}

\newtheorem{theorem}[lemma]{Theorem}
\newtheorem{definition}[lemma]{Definition}
\newtheorem{corollary}[lemma]{Corollary}

\newtheorem*{theorem*}{Theorem}
\newreptheorem{theorem}{Theorem}

\newcommand{\I}{{\mathcal I}}

\newcommand{\Fl}{{\mathsf{Fl}}}

\newcommand{\R}{{\mathbb R}}

\DeclareMathOperator{\conv}{Conv}

\DeclareMathOperator{\Perm}{Perm}
\DeclareMathOperator{\G}{Br}
\DeclareMathOperator{\Br}{Br}

\begin{document}

\title{A positive Grassmannian analogue of the permutohedron}

\author{Lauren K. Williams}
\date{\today}
\thanks{The author was
partially supported by an NSF CAREER award DMS-1049513.}
\address{Department of Mathematics, University of California,
Berkeley, CA 94720-3840}
\email{williams@math.berkeley.edu}
\subjclass[2000]{}

\begin{abstract}
The classical permutohedron $\Perm_n$ is the convex hull of the 
points $(w(1),\dots,w(n))\in \R^n$ where $w$ ranges over all
permutations in the symmetric group $S_n$.
This polytope has many beautiful properties -- for example
it provides a way to visualize the weak Bruhat order:
if we orient
the permutohedron so that 
the longest permutation $w_0$ is at the ``top" 
and the identity $e$ is at the ``bottom,"
then 
the one-skeleton of $\Perm_n$ is the Hasse diagram of the weak 
Bruhat order.  
Equivalently, the paths from $e$ to $w_0$ along
the edges of $\Perm_n$ are in bijection 
with the reduced decompositions of $w_0$.
Moreover, the two-dimensional faces of the permutohedron correspond to 
braid and commuting moves, which by the Tits Lemma, connect any two 
reduced expressions of $w_0$.

In this note we introduce some polytopes $\G_{k,n}$ (which we call
\emph{bridge polytopes}) which provide a 
positive Grassmannian analogue of the permutohedron.  
In this setting, \emph{BCFW bridge decompositions} of 
\emph{reduced plabic graphs} play the role of 
reduced decompositions.  We define $\G_{k,n}$ and explain
how paths along its edges encode 
BCFW bridge decompositions of 
the longest element $\pi_{k,n}$ in the \emph{circular Bruhat order}. 
We also show that two-dimensional faces of $\G_{k,n}$ correspond
to certain local moves for plabic graphs, which by a result of Postnikov \cite{Postnikov},
connect any two reduced plabic graphs associated to $\pi_{k,n}$.
All of these results can be generalized to the positive parts of Schubert cells.
A useful tool in our proofs is the fact that 
our polytopes are isomorphic to certain
\emph{Bruhat interval 
polytopes}. Conversely, our results on bridge polytopes
allow us to deduce some corollaries about the structure of Bruhat interval 
polytopes.
\end{abstract}

\maketitle
\setcounter{tocdepth}{1}
\tableofcontents

\section{Introduction}
The \emph{totally nonnegative part} $(Gr_{k,n})_{\geq 0}$ of the real Grassmannian
$Gr_{k,n}$ is the locus where all Pl\"ucker coordinates are 
non-negative \cite{Lusztig3, Rietsch1, Postnikov}.
Postnikov initiated the combinatorial study of $(Gr_{k,n})_{\geq 0}$: 
he showed that if one stratifies
the space based on which Pl\"ucker coordinates are positive and which are zero, one gets 
a cell decomposition.  The cells are in bijection with several families of 
combinatorial objects, including decorated permutations, and 
\emph{equivalence classes of reduced plabic graphs}.  
Reduced plabic graphs are a certain family of planar bicolored graphs 
embedded in a disk.  These graphs are very interesting:
they can be used to parameterize cells in  $(Gr_{k,n})_{\geq 0}$ \cite{Postnikov} and
to understand the 
\emph{cluster algebra structure} on the Grassmannian \cite{Scott}; they
also arise as  \emph{soliton graphs} associated to the 
KP hierarchy \cite{KW}.  
Most recently reduced plabic graphs and 
the totally non-negative Grassmannian $(Gr_{k,n})_{\geq 0}$ 
have been studied in connection with 
\emph{scattering amplitudes} in $N=4$ super Yang-Mills \cite{Scatt}.  
Motivated by physical considerations,
the authors gave a systematic way for building up a 
reduced plabic graph for a given cell of $(Gr_{k,n})_{\geq 0}$, 
which they called the \emph{BCFW-bridge construction}.

In many ways, reduced plabic graphs behave like reduced decompositions of the symmetric group:
for example, just as any two reduced decompositions of the same permutation can be related by 
braid and commuting moves, any two reduced plabic graphs associated to the same decorated
permutation
can be related via 
certain local moves \cite{Postnikov}.  
The goal of this note is to highlight another way in which reduced plabic graphs
behave like reduced decompositions.  We will introduce a polytope
called the \emph{bridge polytope}, which is a positive Grassmannian analogue of 
the permutohedron in the following sense: just as the one-skeleton of the permutohedron
encodes reduced decompositions of permutations, 
the one-skeleton of the bridge polytope encodes BCFW-bridge decompositions
of reduced plabic graphs.  Moreover, just as the two-dimensional faces of permutohedra
encode braid and commuting moves for reduced decompositions, 
the two-dimensional faces of bridge polytopes
encode local moves for reduced plabic graphs.







\textsc{Acknowledgments:}
The author is grateful to Nima Arkani-Hamed, Jacob Bourjaily, and 
Yan Zhang for 
conversations concerning the BCFW bridge construction.

\section{Background}\label{sec:background}

\subsection{Reduced decompositions, Bruhat order, and the permutohedron}

Recall that the \emph{symmetric group} $S_n$ is the 
group of all permutations on $n$ letters.
If we let $s_i$ denote the simple transposition  exchanging 
$i$ and $i+1$, then $S_n$ is a Coxeter group generated by the
$s_i$ (for $1 \leq i <n$), subject to the relations $s_i^2 = 1$, 
as well as the \emph{commuting relations}
$s_i s_j = s_j s_i$ for $|i-j|\geq 2$ and \emph{braid relations}
$s_i s_{i+1} s_i = s_{i+1} s_i s_{i+1}$.

Given $w\in S_n$, there are many ways to write $w$ 
as a product of  simple transpositions.  
If $w = s_{i_1} \dots s_{i_\ell}$ is a minimal-length such expression,
we call $s_{i_1} \dots s_{i_\ell}$ a \emph{reduced expression} of $w$,
and $\ell$ the \emph{length} $\ell(w)$ of $w$.
The well-known Tits Lemma asserts that any two reduced expressions
can be related by a sequence of braid
and commuting moves if and only if they are reduced expressions for the same
permutation.

There are two important partial orders on $S_n$, both of which are
graded, with rank
function given by the length of the permutation.
The \emph{strong Bruhat order} is the transitive closure
of the cover relation $u \prec v$, where 
$u \prec v$ means that $(i j) u = v$ for $i<j$
and $\ell(v) = \ell(u)+1$.
Here $(i j)$ is the transposition (not necessarily simple) exchanging
$i$ and $j$.
We use the symbol $\preceq$ to indicate the strong Bruhat order.
The strong Bruhat order has many nice combinatorial properties:
it is thin and shellable, and hence is a regular CW poset \cite{Edelman,
Proctor, BW}.
Moreover, this partial order is closely connected to the geometry of the complete flag 
variety $\Fl_n$: it encodes when one Schubert cell is contained 
in the closure of another.  Somewhat less well-known is its connection to 
total positivity.  The totally non-negative part 
$U_{\geq 0}^+$ of the unipotent part of $GL_n$ has a decomposition into
cells, indexed by permutations, and the strong Bruhat order describes
when one cell is contained in the closure of another
\cite{Lusztig3}.

The other partial order associated to $S_n$
is the \emph{(left) weak Bruhat order}.  This
is the transitive closure
of the cover relation $u \lessdot v$, where 
$u \lessdot v$ means that $s_i u = v$ 
and $\ell(v) = \ell(u)+1$. We use the symbol
$\leq$ to indicate the weak Bruhat order.
The weak Bruhat order is 
related to reduced decompositions in the following way:
given any $w\in S_n$, the maximal chains from the identity $e$ to $w$
in the weak Bruhat order are in bijection with reduced decompositions of $w$.
Moreover, the weak order
has the advantage of being conveniently visualized
in terms of a polytope called the permutohedron, as we now describe.

\begin{definition}
The \emph{permutohedron} $\Perm_n$ in $\R^n$ is the convex
hull of the  points 
$\{(z(1),\dots,z(n)) \ \vert \ z\in S_n\}$.
\end{definition}

See Figure \ref{fig:perm} for the permutohedron $\Perm_4$.
We remark that the permutohedron is also connected to the geometry of the 
complete flag variety $\Fl_n$: it is the image 
of $\Fl_n$ under the moment map.

Given a permutation $z\in S_n$, we will often refer to it using the notation
$(z(1),\dots, z(n))$, or 
$(z_1,\dots, z_n)$, where $z_i$ denotes $z(i)$. 

\begin{figure}[h]
\centering
\includegraphics[height=2in]{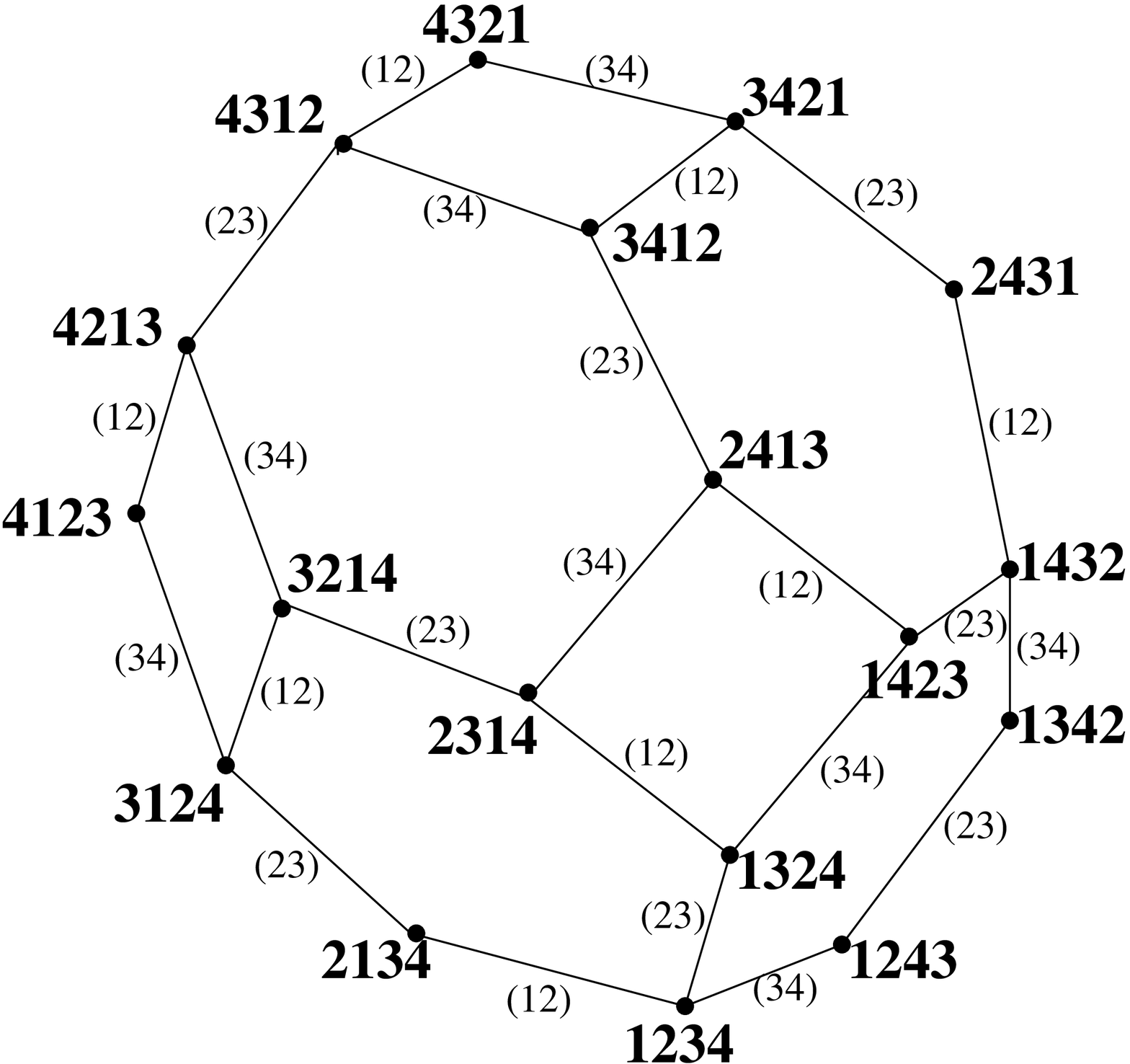}
\caption{The permutohedron $\Perm_4$.  Edges are labeled by 
the values swapped when we go between the permutations labeling
the two vertices.}
\label{fig:perm}
\end{figure}

The following proposition is well-known, see e.g. \cite{OM}.
\begin{proposition}\label{prop:Perm}
Let $u$ and $v$ be permutations in $S_n$ with 
$\ell(u) \leq \ell(v)$.
There is an edge between vertices $u$ and $v$ in 
the permutohedron $\Perm_n$ if and only if 
$v=s_i u$ for some $i$ and $\ell(v) = \ell(u) \pm 1$, in which case
we label the corresponding edge of the permutohedron by $s_i$.
Moreover, the two-dimensional faces of $\Perm_n$ are either squares
(with edges labeled in an alternating fashion by $s_i$ and $s_j$,
where $|i-j| \geq 2$) or hexagons
(with edges labeled in an alternating fashion by 
$s_i$ and $s_{i+1}$ for some $i$).
\end{proposition}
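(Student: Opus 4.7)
The plan is to analyze the faces of $\Perm_n$ by studying, for each linear functional $f(x)=\sum_{i=1}^n c_i x_i$, the set of permutation vectors that maximize $f$; since $\Perm_n$ is the convex hull of the permutation vectors in $\R^n$, every face arises uniquely this way.

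First I would classify the edges. Given $c\in\R^n$, a permutation vector $x$ maximizes $f$ precisely when it is obtained by ranking: the largest value $n$ sits at the coordinate with the largest $c_i$, and so on down. If all $c_i$ are distinct the maximizer is unique, giving a vertex, so for a 1-dimensional face we need exactly two maximizers. This forces $c$ to have precisely one tied pair $c_a=c_b$, with all remaining entries distinct from each other and from this pair; the values placed at positions $\{a,b\}$ in any maximizer must then be the two consecutive integers occupying the corresponding tied rank, say $i$ and $i+1$. Therefore the two maximizers differ by interchanging values $i$ and $i+1$, i.e.\ by left multiplication by $s_i$. Conversely, given $v=s_i u$, one builds such a $c$ directly by equalizing the entries of $u^{-1}$ corresponding to values $i$ and $i+1$ and making all other entries distinct, producing an edge labeled $s_i$. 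Since $s_i$ changes the inversion number by exactly one, $\ell(v)=\ell(u)\pm 1$.

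For the two-dimensional faces I would continue the same analysis, using the structural observation that the face corresponding to $c$ decomposes as a product of smaller permutohedra indexed by the blocks of equal $c_i$ values; in particular the dimension equals $\sum_B(|B|-1)=n-(\text{number of blocks})$. A 2-face therefore comes from a $c$ with either (A) two disjoint tied pairs, or (B) one tied triple. In case (A) the two tied pairs lie at two distinct tied ranks, hence the two pairs of consecutive values being swapped are disjoint, say $\{i,i+1\}$ and $\{j,j+1\}$ with $|i-j|\ge 2$; the face is $\Perm_2\times\Perm_2$, a square whose edges alternate between $s_i$ and $s_j$. In case (B) the tied triple receives three consecutive values $\{i,i+1,i+2\}$ in the six possible orders, yielding $\Perm_3$, a hexagon whose edges alternate between $s_i$ and $s_{i+1}$ (the only simple transpositions acting within three consecutive values).

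The main obstacle is justifying this ``product of smaller permutohedra'' picture for an arbitrary face, i.e.\ showing that the set of maximizers of $f$ is obtained by independently permuting each block of consecutive values within the positions where $c$ is constant. I would verify this directly from the rearrangement inequality characterization of the maximizers. Once that structural fact is established, the labeling claims reduce to the elementary observation that transposing two values $p<q$ in a permutation equals left multiplication by a simple transposition if and only if $q=p+1$, which immediately identifies the alternation pattern on the edges of both the square and the hexagon.
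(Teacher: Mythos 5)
Your argument is correct, and in fact the paper offers no proof of this proposition at all --- it is stated as well-known with a citation --- so there is nothing internal to compare against. Your route is the standard one: identify each face of $\Perm_n$ with the maximizer set of a linear functional, observe via the rearrangement inequality that the maximizers of $c\cdot x$ are exactly the permutations assigning to each block of tied $c$-coordinates the corresponding consecutive run of values (so the face is a product of smaller permutohedra of dimension $\sum_B(|B|-1)$), and then read off that dimension $1$ forces one tied pair (edge labeled $s_i$, with $\ell(s_iu)=\ell(u)\pm1$ automatic), while dimension $2$ forces either two disjoint tied pairs (square, labels $s_i,s_j$ with $|i-j|\ge 2$ since the two value-runs are disjoint) or one tied triple (hexagon, labels $s_i,s_{i+1}$). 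The one step you flag as the ``main obstacle'' --- that maximizers are obtained by permuting each block independently --- is indeed the crux, but it is a routine exchange argument: if $c_a>c_b$ and $x_a<x_b$ then swapping strictly increases $c\cdot x$, and all block-respecting permutations give equal value; together with the fact that adjacent edges of a face cannot carry the same label, this closes the alternation claims. This is essentially the faces-of-the-permutohedron/ordered-set-partition description found in the standard references the paper points to, so your proof is a legitimate stand-alone justification of the cited statement.
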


See Figure \ref{fig:perm} for the example of $\Perm_4$.
Proposition \ref{prop:Perm} implies that edges of the permutohedron correspond to 
cover relations in the weak Bruhat order, and the two-dimensional faces correspond to 
braid and commuting relations for reduced words.

Let $w_0$ be the longest permutation in $S_n$.  This permutation
is unique, and can be defined by $w_0(i) = n+1-i$.  
Proposition \ref{prop:Perm} can be equivalently restated as follows.

\begin{proposition}\label{prop:Perm2}
The shortest paths from $e$ to $w_0$ along the one-skeleton 
of the permutohedron 
$\Perm_n$ are in bijection with the reduced decompositions of $w_0$.
\end{proposition}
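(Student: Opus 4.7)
The plan is to deduce the statement directly from Proposition \ref{prop:Perm} together with the bijection between reduced expressions of $w_0$ and saturated chains from $e$ to $w_0$ in the left weak Bruhat order, which was recalled in the background above. First, by Proposition \ref{prop:Perm}, every edge of $\Perm_n$ joins two permutations $u, v$ with $v = s_i u$ and $|\ell(v) - \ell(u)| = 1$, so each edge traversed in a path changes the length by exactly $\pm 1$. In particular, every edge of the one-skeleton is either an ``up'' step or a ``down'' step with respect to the weak order.

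Next, I would argue that a shortest path from $e$ to $w_0$ can use only up steps. Suppose a path $e = u_0, u_1, \ldots, u_N = w_0$ uses $a$ up edges and $b$ down edges. Then $N = a + b$, and telescoping the length changes gives $a - b = \ell(w_0) - \ell(e) = \ell(w_0)$. Hence $N \geq \ell(w_0)$, with equality if and only if $b = 0$. Thus a shortest path has length exactly $\ell(w_0)$ and is a saturated chain $e = u_0 \lessdot u_1 \lessdot \cdots \lessdot u_{\ell(w_0)} = w_0$ in the left weak Bruhat order.

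To conclude, I would invoke the standard fact (noted in the background section) that for each such saturated chain, writing $u_j = s_{i_j} u_{j-1}$ produces a reduced expression $w_0 = s_{i_{\ell(w_0)}} s_{i_{\ell(w_0)-1}} \cdots s_{i_1}$, and conversely any reduced expression of $w_0$ yields a saturated chain via its partial products read from the right. Composing this bijection with the above identification of shortest paths and saturated chains yields the desired bijection between shortest paths from $e$ to $w_0$ along the one-skeleton and reduced decompositions of $w_0$.

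I do not expect any serious obstacle: the argument is essentially bookkeeping that repackages Proposition \ref{prop:Perm} together with the definition of the weak Bruhat order. The only point requiring a moment's care is fixing the convention for reading edge labels along the path so that the resulting word appears in the correct order as a reduced expression of $w_0$.
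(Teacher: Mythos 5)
Your argument is correct and matches the paper's intent: the paper simply presents Proposition \ref{prop:Perm2} as an equivalent restatement of Proposition \ref{prop:Perm}, and your telescoping argument (up-steps minus down-steps equals $\ell(w_0)$, so shortest paths are exactly the saturated chains in weak order) together with the standard chain--reduced-word bijection is precisely the bookkeeping that justifies that restatement.
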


For example, we can see from Figure \ref{fig:perm} that there
is a path 
$$e \lessdot (2,1,3,4) \lessdot (3,1,2,4) \lessdot (4,1,2,3) 
\lessdot (4,2,1,3) \lessdot (4,3,1,2) \lessdot (4,3,2,1) = w_0$$
in the permutohedron $\Perm_4$.  Reading off the edge labels of this path 
gives rise to the reduced decomposition
$s_1 s_2 s_1 s_3 s_2 s_1$ of $w_0$.

\subsection{The positive Grassmannian, permutations, circular Bruhat order,
and plabic graphs}

In this section we introduced the positive Grassmannian, and some combinatorial
objects associated to it, including permutations and plabic graphs.  As we will see, 
\emph{reduced plabic graphs} are in many ways analogous to reduced decompositions of 
permutations.

The \emph{real Grassmannian} $Gr_{k,n}$ is the space of all
$k$-dimensional subspaces of $\R^n$.  An element of
$Gr_{k,n}$ can be viewed as a full-rank $k\times n$ matrix modulo left
multiplication by nonsingular $k\times k$ matrices.  In other words, two
$k\times n$ matrices represent the same point in $Gr_{k,n}$ 
if and only if they
can be obtained from each other by row operations.
Let $\binom{[n]}{k}$ be the set of all $k$-element subsets of $[n]:=\{1,\dots,n\}$.
For $I\in \binom{[n]}{k}$, let $\Delta_I(A)$
be the {\it Pl\"ucker coordinate}, that is, the maximal minor of the $k\times n$ matrix $A$ located in the column set $I$.
The map $A\mapsto (\Delta_I(A))$, where $I$ ranges over $\binom{[n]}{k}$,
induces the {\it Pl\"ucker embedding\/} $Gr_{k,n}\hookrightarrow \mathbb{RP}^{\binom{n}{k}-1}$.
The \emph{totally non-negative part of the Grassmannian} (sometimes called the 
\emph{positive Grassmannian})
$(Gr_{k,n})_{\geq 0}$ is the subset of $Gr_{k,n}$ such that all
Pl\"ucker coordinates are non-negative.

If we partition $(Gr_{k,n})_{\geq 0}$ into strata based on 
which Pl\"ucker coordinates are positive and which are zero,
we obtain a decomposition into \emph{positroid cells} \cite{Postnikov}.
Postnikov showed that the cells are in bijection with, and naturally
labeled by, several 
families of combinatorial objects, including 
\emph{Grassmann necklaces},
\emph{decorated permutations} and 
\emph{equivalence classes of reduced plabic graphs}.  
He also introduced the \emph{circular Bruhat order} on decorated permutations, 
which describes when one cell is contained in another. 
Like the strong Bruhat order, the circular Bruhat order is 
graded, thin, shellable, and hence is a regular CW poset \cite{Williams}.

\begin{definition}
A \emph{Grassmann necklace} is a sequence
$\I = (I_1,\dots,I_n)$ of subsets $I_r \subset [n]$ such that,
for $i\in [n]$, if $i\in I_i$ then $I_{i+1} = (I_i \setminus \{i\}) \cup \{j\}$,
for some $j\in [n]$; and if $i \notin I_r$ then $I_{i+1} = I_i$.
(Here indices $i$ are taken modulo $n$.)  In particular, we have
$|I_1| = \dots = |I_n|$, which is equal to some $k \in [n]$.  We then say that
$\I$ is a Grassmann necklace of \emph{type} $(k,n)$.
\end{definition}

To construct the Grassmann necklace associated to a positroid cell,
one uses the following construction.

\begin{lemma} \label{lem:necklace}
Define the shifted linear order
$<_i$  by $i <_i i+1 <_i \dots <_i n <_i 1 <_i \dots <_i i-1$.
Given $A\in Gr_{k,n}$,
let $\mathcal I(A) = (I_1,\dots,I_n)$ be the sequence of subsets in
$[n]$ such that, for $i \in [n]$, $I_i$ is the lexicographically
minimal subset of $\binom{[n]}{k}$  with respect to 
the shifted linear order
 such that $\Delta_{I_i}(A) \neq 0$.
Then $\I(A)$ is a Grassmann necklace of type $(k,n)$.
\end{lemma}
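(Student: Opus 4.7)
The plan is to translate the statement into matroid language. Let $\M := \M(A)$ be the rank-$k$ linear matroid on $[n]$ whose bases are the sets $K \in \binom{[n]}{k}$ with $\Delta_K(A) \neq 0$; such a basis exists because $A$ has rank $k$. Under this identification, $I_i$ is simply the $<_i$-lexicographically minimum basis of $\M$, and by the standard matroid greedy algorithm it is built by sweeping through $[n]$ in $<_i$-order and keeping each element whenever it preserves independence. Since $|I_i| = k$ is automatic, the two Grassmann necklace transition rules reduce to a comparison between the greedy outputs under $<_i$ and $<_{i+1}$, two orders that differ only by moving $i$ from the first position to the last.

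The case $i \notin I_i$ is easy. Because the greedy under $<_i$ examines $i$ first and rejects it, $i$ must be a loop of $\M$, i.e.\ no basis contains $i$. But $<_i$ and $<_{i+1}$ restrict to the same linear order on $[n] \setminus \{i\}$, and every basis of $\M$ lies in $[n] \setminus \{i\}$, so the lex-minimum basis is the same under either order, giving $I_{i+1} = I_i$.

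Now assume $i \in I_i$ and write $I_i = \{i, j_2, j_3, \ldots, j_k\}$ with $i <_i j_2 <_i \cdots <_i j_k$. I will show $\{j_2, \ldots, j_k\} \subseteq I_{i+1}$, which forces $I_{i+1} = (I_i \setminus \{i\}) \cup \{j\}$ for the unique remaining element $j \in [n]$ (with $j = i$ allowed). Suppose for contradiction that some $j_s \notin I_{i+1}$, and take $s$ minimal with this property. Let $C$ be the fundamental circuit of $j_s$ with respect to the basis $I_{i+1}$. The lex-minimality of $I_{i+1}$ under $<_{i+1}$ (equivalently, the exchange characterization of minimum-weight bases) forces every element of $C \setminus \{j_s\}$ to be $<_{i+1} j_s$. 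Since $i$ is the $<_{i+1}$-largest element, this keeps $i$ out of $C$, so $C \setminus \{j_s\} \subseteq [n] \setminus \{i\}$, where $<_i$ and $<_{i+1}$ coincide, and hence every $f \in C \setminus \{j_s\}$ also satisfies $f <_i j_s$.

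Finally, the greedy construction of $I_i$ says $j_s$ is the $<_i$-smallest element $e$ for which $\{i, j_2, \ldots, j_{s-1}, e\}$ is independent, so a straightforward induction on position shows that every $f <_i j_s$ lies in $\mathrm{span}\{i, j_2, \ldots, j_{s-1}\}$. Applying this to each $f \in C \setminus \{j_s\}$ yields $j_s \in \mathrm{span}(C \setminus \{j_s\}) \subseteq \mathrm{span}\{i, j_2, \ldots, j_{s-1}\}$, contradicting the independence of $\{i, j_2, \ldots, j_s\} \subseteq I_i$. The main obstacle, and really the only subtlety, is noticing that the shift from $<_i$ to $<_{i+1}$ only moves $i$ from first to last: this is precisely what keeps $i$ out of the fundamental circuit $C$ and lets the two greedy/lex-min characterizations be compared on the common order restricted to $[n] \setminus \{i\}$.
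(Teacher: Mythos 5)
Your argument is correct. Note that the paper does not prove Lemma \ref{lem:necklace} at all: it is quoted as background (it is a result of Postnikov, proved in his paper via the matroid exchange axiom), so there is no in-paper proof to compare against. Your route --- identify $I_i$ with the $<_i$-lexicographically minimal basis of the rank-$k$ matroid $\M(A)$, note that greedy computes it, and handle the two necklace cases separately --- is a legitimate self-contained proof and is essentially the standard one. The two pillars both check out: (i) if $i \notin I_i$ then $i$ is rejected at the very first greedy step, so column $i$ is zero ($i$ is a loop), every basis avoids $i$, and since $<_i$ and $<_{i+1}$ restrict to the same order on $[n]\setminus\{i\}$ you get $I_{i+1}=I_i$; (ii) if $i \in I_i$, the exchange argument (any $f\in C\setminus\{j_s\}$ with $f>_{i+1} j_s$ would yield a lex-smaller basis $(I_{i+1}\setminus\{f\})\cup\{j_s\}$) together with the fact that $i$ is $<_{i+1}$-maximal pins $C\setminus\{j_s\}$ inside $[n]\setminus\{i\}$, where the two orders agree, and the greedy-prefix span property of $I_i$ then gives $j_s\in\mathrm{span}\{i,j_2,\dots,j_{s-1}\}$, contradicting independence of a subset of $I_i$; hence $I_i\setminus\{i\}\subseteq I_{i+1}$ and a cardinality count gives $I_{i+1}=(I_i\setminus\{i\})\cup\{j\}$, with $j=i$ permitted by the definition. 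Two cosmetic remarks: the minimality of $s$ is never used (any violating $s$ gives the contradiction), and you implicitly use the standard facts that the greedy output is the lex-minimal basis and that a circuit through $j_s$ puts $j_s$ in the span of its other elements; both are standard and fine to cite, but worth stating explicitly if you want the proof fully self-contained.
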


\begin{definition}
A \emph{decorated permutation} $\pi$ on $n$ letters is a permutation
on $n$ letters in which fixed points have one of two colors, 
``clockwise" and ``counterclockwise".  A position $i$ of $\pi$
is called a \emph{weak excedance} if $\pi(i) >i$ or 
$\pi(i)= i$ and $i$ is a counterclockwise fixed point.
\end{definition}

\begin{definition}\label{necklace-to-perm}
Given a Grassmann necklace $\mathcal I$, define
a decorated permutation $\pi = \pi(\mathcal I)$ by requiring that
\begin{enumerate}
\item if $I_{i+1} = (I_i \setminus \{i\}) \cup \{j\}$,
for $j \neq i$, then $\pi(j)=i$.
\item if $I_{i+1}=I_i$ and $i \in I_i$ then $\pi(i)=i$ is 
a counterclockwise fixed point.
\item if $I_{i+1}=I_i$ and $i \notin I_i$ then $\pi(i)=i$ is 
a clockwise fixed point.
\end{enumerate}
As before, indices are taken modulo $n$.
\end{definition}

Using the above constructions, one may show the following.

\begin{theorem}\cite{Postnikov}
The cells of $(Gr_{k,n})_{\geq 0}$ are in bijection with 
Grassmann necklaces of type $(k,n)$ and with 
decorated permutations on $n$ letters with exactly $k$
weak excedances.
\end{theorem}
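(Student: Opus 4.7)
The plan is to factor the claimed bijection through two separate correspondences: cells $\leftrightarrow$ Grassmann necklaces, and Grassmann necklaces $\leftrightarrow$ decorated permutations, and then verify that the weak excedance statistic recovers $k$.

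For the first correspondence, I would use Lemma \ref{lem:necklace} to define a map $A \mapsto \mathcal{I}(A)$ on all of $(Gr_{k,n})_{\geq 0}$. Observe that $\mathcal{I}(A)$ depends only on which Plücker coordinates of $A$ are nonzero, hence only on the matroid of $A$. Since the positroid cell containing $A$ is by definition the locus of matrices with the same support of Plücker coordinates, $\mathcal{I}$ descends to a well-defined map from cells to Grassmann necklaces of type $(k,n)$. For injectivity, I would show that a positroid (the matroid of any matrix in the cell) is uniquely determined by its necklace: each $I_i$ is precisely the $<_i$-lex-minimal basis of the underlying matroid, and the cyclic collection of these minimal bases characterizes positroids among all matroids on $[n]$. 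Surjectivity requires constructing, for every Grassmann necklace $\mathcal{I}$, a totally nonnegative matrix realizing it; the cleanest route is via Postnikov's parametrization by Le-diagrams or plabic graphs (or, in the spirit of this paper, via the BCFW bridge construction, which builds up a totally positive representative of each cell one bridge at a time, with each bridge having a predictable effect on the necklace).

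For the second correspondence, Definition \ref{necklace-to-perm} already gives a candidate map $\mathcal{I} \mapsto \pi(\mathcal{I})$. I would write down an explicit inverse: given a decorated permutation $\pi$, define
\[
I_i = \{\, j \in [n] \ :\ \pi^{-1}(j) \text{ is a weak excedance position in the shifted order } <_i \,\}.
\]
A direct case check using the three cases of Definition \ref{necklace-to-perm} (non-fixed, ccw-fixed, cw-fixed) shows that the sequence $(I_1,\dots,I_n)$ so defined satisfies the Grassmann necklace update rule, and that these two constructions are mutual inverses. Since each $I_i$ is obtained from $I_{i-1}$ by a single deletion-insertion (or not at all), $|I_1| = |I_2| = \dots = |I_n|$; this common size is exactly the number of weak excedances of $\pi$ in the usual (unshifted) order, giving $|\mathcal{I}| = k$ iff $\pi$ has $k$ weak excedances.

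The main obstacle is the surjectivity step in the first correspondence: producing an honest totally nonnegative matrix with a prescribed necklace. Every other step is a matter of unwinding the definitions or a clean combinatorial verification. I would handle this obstacle by invoking Postnikov's parametrization of cells by Le-diagrams (which can be read off directly from the necklace): a Le-diagram of shape $\lambda$ inside a $k \times (n-k)$ rectangle, with positive real entries in its boxes, yields a matrix in $(Gr_{k,n})_{\geq 0}$ whose necklace can be computed combinatorially and matched to any prescribed $\mathcal{I}$. Combined with the injective map from cells to necklaces above, this gives the full bijection; composing with the second correspondence completes the proof.
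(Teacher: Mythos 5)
Your overall plan (factor through cells $\leftrightarrow$ Grassmann necklaces $\leftrightarrow$ decorated permutations, and outsource the realizability/surjectivity step to Postnikov's Le-diagram or bridge parametrization) is the standard route; note that the paper itself offers no proof of this statement -- it is quoted from \cite{Postnikov} -- so there is no internal argument to compare against. However, your explicit inverse formula in the second correspondence is wrong under the paper's conventions, and this is exactly the step you claim to settle by ``a direct case check.'' Definition \ref{necklace-to-perm} sets $\pi(j)=i$ when $I_{i+1}=(I_i\setminus\{i\})\cup\{j\}$; unwinding this, an element $j$ enters the necklace at step $\pi(j)\to\pi(j)+1$ and leaves at step $j\to j+1$, so $j\in I_i$ iff $i$ lies in the cyclic interval $(\pi(j),j]$, i.e.\ iff $j<_i\pi(j)$ (with counterclockwise fixed points in every $I_i$ and clockwise fixed points in none). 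In other words, $I_i$ is the set of \emph{positions} that are weak excedances of $\pi$ with respect to $<_i$. Your formula $I_i=\{\,j:\pi^{-1}(j)\text{ is a weak excedance position w.r.t. }<_i\,\}$ instead collects the \emph{values} of those weak excedances. Concretely, for the totally positive cell of $Gr_{1,3}$ one has $\pi=\pi_{1,3}=(3,1,2)$ and the necklace is $I_1=\{1\}$, $I_2=\{2\}$, $I_3=\{3\}$; your formula gives $I_1=\{3\}$ (the unique weak excedance is in position $1$, with value $3$), and the resulting sequence even fails the necklace update rule, since $1\notin\{3\}$ would force $I_2=I_1$. With the corrected formula the three-case verification and the statement $|I_1|=\#\{\text{weak excedances}\}=k$ do go through.

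A smaller caveat: you describe injectivity of the map from cells to necklaces (``the cyclic collection of lex-minimal bases characterizes positroids'') as a matter of unwinding definitions, but the fact that a positroid is determined by its Grassmann necklace is itself a substantive theorem (conjectured by Postnikov and proved by Oh), not a routine check; alternatively one recovers the decorated permutation directly from a cell via its Le-diagram or the trip permutation of any reduced plabic graph, which is how \cite{Postnikov} proceeds. Since the statement is cited to \cite{Postnikov} anyway, leaning on those results is legitimate, but they should be invoked explicitly rather than presented as immediate.
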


The totally non-negative Grassmannian 
$(Gr_{k,n})_{\geq 0}$ has a unique top-dimensional cell, consisting
of elements where all Pl\"ucker coordinates are strictly positive.
This subset is called the \emph{totally positive Grassmannian}
$(Gr_{k,n})_{>0}$, and it is labeled by the decorated permutation
$\pi_{k,n}:=(n-k+1, n-k+2,\dots, n, 1, 2, \dots, n-k)$.
More generally, there is a nice class of positroid cells called the 
{\it TP} or
{\it totally positive Schubert cells}. 

\begin{definition}\label{def:TPSchubert}
A \emph{TP Schubert cell} is the unique positroid cell of 
greatest dimension which lies in 
the intersection of a usual Schubert cell with $(Gr_{k,n})_{\geq 0}$.
The TP Schubert cells in $(Gr_{k,n})_{\geq 0}$ are in bijection with 
$k$-element subsets $J$ of $[n]=\{1,2,\dots,n\}$.  
To calculate the decorated permutation
associated to the TP Schubert cell indexed by $J$,
write $J = \{j_1 < \dots < j_k\}$, and the complement of $J$ 
as $J^c =\{h_1 < \dots < h_{n-k}\}$.  Then 
the corresponding decorated permutation $\pi = \pi(J)$ is defined by 
$\pi(h_i) = i$ for all $1 \leq i \leq n-k$,
and $\pi(j_i) = n-k+i$ for all $1 \leq i \leq k$.
(Any fixed points that arise are considered to be weak excedances if and only if they 
are in positions labeled by $J$.)
\end{definition}

A \emph{Grassmannian permutation} is a permutation 
$\pi = (\pi(1),\dots, \pi(n))$ which has at most one descent, 
i.e. at most one position $i$ such that $\pi(i) > \pi({i+1})$.
Note that the permutations of the form $\pi(J)$ defined above are precisely
the inverses of the Grassmannian permutations, since 
$\pi(J)^{-1} = (h_1, h_2, \dots, h_{n-k}, j_1, j_2, \dots, j_k)$.

In the case that $J=\{1,2,\dots, k\}$, the corresponding
TP Schubert cell is $(Gr_{k,n})_{>0}$
and $\pi(J) = \pi_{k,n}$.

To describe the partial order on positroid cells, it is
convenient to represent each decorated permutation
by a related affine permutation, as was done in \cite{KLS}. 
Then, the circular Bruhat order for $(Gr_{k,n})_{\geq 0}$
can be viewed as the restriction of the 
affine Bruhat order to the corresponding affine permutations,
together with a new bottom element $\hat{0}$ added to the poset.

\begin{definition}\label{def:affine}
Given a decorated permutation $\pi$ on $n$ letters, we construct its 
affinization $\tilde{\pi}$ as follows.
If $\pi(i)<i$, set $\tilde{\pi}(i):=\pi(i)+n$.
If $\pi(i)=i$ and $i$ is a clockwise fixed point, set 
$\tilde{\pi}(i):=i$.
If $\pi(i)=i$ and $i$ is a counterclockwise fixed point, set
$\tilde{\pi}(i):=i+n$.
Finally, if $\pi(i)>i$, set $\tilde{\pi}(i):=\pi(i)$.
\end{definition}

Clearly the affine permutation constructed above is a map
from $\{1,2,\dots,n\}$ to $\{1, 2,\dots, 2n\}$ such that
$i \leq \tilde{\pi}(i) \leq i+n$.
And modulo $n$, it agreeds with the underlying permutation $\pi$.

We now discuss plabic graphs, which e.g. are useful for constructing
parameterizations of positroid cells \cite{Postnikov}, and for 
understanding the cluster structure on the Grassmannian \cite{Scott}.

\begin{definition}
A \emph{planar bicolored graph} (or \emph{plabic graph}) is an undirected graph $G$ drawn inside a disk
(and considered modulo homotopy)
plus $n$ {\it boundary vertices\/} on the boundary of the disk,
labeled $1,\dots,n$ in clockwise order. 
The remaining {\it internal vertices\/}
are strictly inside the disk and are
colored in black and white.  We require that each boundary vertex is incident to a 
single edge.
\end{definition}

\begin{figure}[h]
\centering
\includegraphics[height=1in]{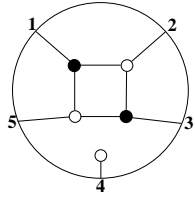}
\caption{A plabic graph}
\label{plabic}
\end{figure}

We will always assume that a plabic graph is {\it leafless}, i.e. that
it has no non-boundary leaves, and that it has no isolated components.
The following map gives a connection between plabic graphs and decorated permutations.

\begin{definition}\label{gen:trip}
Given a plabic graph $G$,
the \emph{trip} $T_i$ is the directed path which starts at the boundary vertex
$i$, and follows the ``rules of the road": it turns right at a
black vertex and   left at a white vertex.
Note that $T_i$  will also
end at a boundary vertex.  The \emph{decorated trip permutation} $\pi_G$
is the permutation such that $\pi_G(i)=j$ whenever $T_i$
ends at $j$.  Moreover, if there is a white (respectively, black)
boundary leaf at boundary vertex $i$, then $\pi_G(i)=i$ is a 
counterclockwise (respectively, clockwise) fixed point.
\end{definition}

We now 
define some local transformations of plabic graphs, which are analogous in some ways
to braid and commuting moves for reduced expressions in the symmetric group.

(M1) SQUARE MOVE.  If a plabic graph has a square formed by
four trivalent vertices whose colors alternate,
then we can switch the
colors of these four vertices.
\begin{figure}[h]
\centering
\includegraphics[height=.6in]{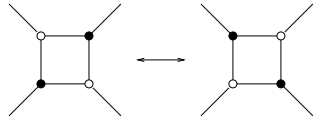}
\caption{Square move}
\label{M1}
\end{figure}

(M2) UNICOLORED EDGE CONTRACTION/UNCONTRACTION.
If a plabic graph contains an edge with two vertices of the same color,
then we can contract this edge into a single vertex with the same color.
We can also uncontract a vertex into an edge with vertices of the same
color.
\begin{figure}[h]
\centering
\includegraphics[height=.3in]{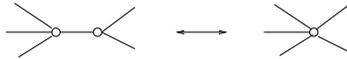}
\caption{Unicolored edge contraction}
\label{M2}
\end{figure}

(M3) MIDDLE VERTEX INSERTION/REMOVAL.
If a plabic graph contains a vertex of degree 2,
then we can remove this vertex and glue the incident
edges together; on the other hand, we can always
insert a vertex (of any color) in the middle of any edge.

\begin{figure}[h]
\centering
\includegraphics[height=.07in]{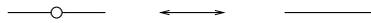}
\caption{Middle vertex insertion/ removal}
\label{M3}
\end{figure}

(R1) PARALLEL EDGE REDUCTION.  If a network contains
two trivalent vertices of different colors connected
by a pair of parallel edges, then we can remove these
vertices and edges, and glue the remaining pair of edges together.

\begin{figure}[h]
\centering
\includegraphics[height=.25in]{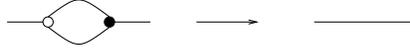}
\caption{Parallel edge reduction}
\label{R1}
\end{figure}

\begin{definition}\cite{Postnikov}
Two plabic graphs are called \emph{move-equivalent} if they can be obtained
from each other by moves (M1)-(M3).  The \emph{move-equivalence class}
of a given plabic graph $G$ is the set of all plabic graphs which are move-equivalent
to $G$.
A leafless plabic graph without isolated components
is called \emph{reduced} if there is no graph in its move-equivalence
class to which we can apply (R1).
\end{definition}

The following result is analogous to the Tits Lemma for reduced decompositions
of permutations.

\begin{theorem}\cite[Theorem 13.4]{Postnikov}
Two reduced plabic graphs 
are move-equivalent if and only if they have the same decorated
trip permutation.
\end{theorem}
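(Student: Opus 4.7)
The plan is to handle the two implications separately, with the forward direction being a routine local check and the converse carrying almost all the content. For the forward direction, one verifies for each of the moves (M1), (M2), (M3) that the trip permutation is preserved by inspecting, for each strand entering the patch where the move is applied, that it exits at the same boundary edge before and after the move. For (M2) and (M3) this is nearly immediate from the rules of the road: contracting a same-color edge or deleting a degree-2 vertex does not change the turn direction experienced by an incoming strand. For (M1), a small case analysis shows that each of the four strands entering the square patch comes in on one edge and leaves on a prescribed other edge, and the prescription is the same whether the four corners are colored $B$-$W$-$B$-$W$ or $W$-$B$-$W$-$B$.

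For the converse, I would induct on the number of crossings of strands (equivalently, on the length of the affine permutation $\tilde{\pi}_G$ from Definition \ref{def:affine}), using the BCFW bridge construction as the inductive tool. Concretely, given a reduced plabic graph $G$ with trip permutation $\pi$, I would (i) show that whenever $\pi$ has a ``removable'' adjacent transposition $s_i$ — meaning $s_i \pi$ or $\pi s_i$ is shorter in the circular Bruhat order — one can, after applying a sequence of moves (M1)-(M3), bring $G$ into a form with a \emph{peelable bridge} at position $i$, i.e., a trivalent black-white pair of vertices sharing an edge and attached by two further edges to the boundary vertices $i$ and $i{+}1$; (ii) show that removing such a bridge yields a reduced plabic graph $G^-$ whose trip permutation is $\pi$ multiplied on the appropriate side by $s_i$; and (iii) apply the inductive hypothesis to $G^-$ and $(G')^-$ after arranging a common peelable bridge in both $G$ and $G'$.

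The main obstacle is step (i): a generic reduced graph does not have an obviously peelable bridge at a prescribed position, so one must show the moves suffice to create one. The key leverage is the strand-diagram viewpoint: in a reduced plabic graph any two strands cross at most once, and the decorated trip permutation records exactly which ordered pairs of strands cross. To produce a bridge at position $i$, one locates the unique crossing of the strands $T_{\pi^{-1}(i+1)}$ and $T_{\pi^{-1}(i)}$ and uses (M1), (M2), (M3) to migrate that crossing toward the boundary between $i$ and $i{+}1$. The migration is organized by induction on the number of faces separating the crossing from the boundary; at each step, either a square move (M1) slides the crossing past an adjacent face, or (M2)/(M3) first normalizes the local configuration so that (M1) becomes applicable. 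This is analogous to the classical argument that a reduced pseudoline arrangement can be locally simplified by braid moves, and it is where all the combinatorial work resides. Once step (i) is secured, matching peelable bridges in $G$ and $G'$ and applying the inductive hypothesis completes the proof; the base case of a crossing-free reduced graph is determined up to (M2) and (M3) by its matching of boundary vertices, which in turn is read off from the decorated permutation.
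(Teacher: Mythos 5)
This statement is quoted in the paper from Postnikov (his Theorem 13.4) and no proof is given there, so there is nothing of the paper's to compare against; your sketch has to stand on its own. The forward direction is fine: checking that (M1)--(M3) preserve the decorated trip permutation is indeed a routine local verification. But the converse, as written, is a plan rather than a proof, and the plan's step (i) is exactly where the entire difficulty of the theorem lives. You assert that, given a reduced graph $G$ whose trip permutation admits a length-decreasing adjacent transposition at position $i$, the moves (M1)--(M3) can ``migrate'' the crossing of the two relevant trips to the boundary so as to expose a peelable bridge between $i$ and $i{+}1$, and you justify this only by analogy with pseudoline arrangements. That analogy is not a proof: plabic graphs are not wiring diagrams (vertices can have arbitrary valence, trips can share edges rather than cross transversally, and the intermediate graphs must be kept reduced so that (R1) never becomes forced), and organizing the migration ``by induction on the number of faces separating the crossing from the boundary'' presupposes a well-defined notion of the region swept by the two trips and a guarantee that at each stage some move applies without creating a parallel-edge reduction. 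Making this precise is essentially the content of the theorem (or of an equivalent canonical-form statement), so as it stands the argument is circular in spirit: the hard combinatorial lemma is named but not established.

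Two further points need attention even granting step (i). First, ``in a reduced plabic graph any two strands cross at most once, and the trip permutation records which pairs cross'' is itself a nontrivial characterization (Postnikov's Theorem 13.2, or the resonance criterion of \cite{KW} quoted in the paper); you should invoke it explicitly and verify that your bridge-removal step preserves it, i.e.\ that $G^-$ in step (ii) is again reduced with trip permutation $s_i\pi$ (or $\pi s_i$) of smaller length -- this requires an argument, not just the picture of deleting two trivalent vertices, and also requires handling the decorated fixed points created by the removal. Second, the base case is not quite ``crossing-free graphs are determined by their boundary matching'': a decoration of the identity has no nontrivial matching at all, and you must show that any reduced graph whose trips are all fixed points reduces via (M2)/(M3) to a disjoint union of appropriately colored lollipops, with the colors matching the clockwise/counterclockwise decoration. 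These are fixable, but together with the missing step (i) they mean the proposal is an outline of a plausible strategy (one that does appear, fully executed, in the literature on BCFW bridge decompositions) rather than a complete proof; note also that Postnikov's own argument proceeds differently, by moving an arbitrary reduced graph into a canonical form indexed by the decorated permutation rather than by peeling bridges.
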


A priori, it is not so easy to detect whether a given plabic graph is reduced.
One characterization was given in \cite[Theorem 13.2]{Postnikov}.
Another very
simple characterization 
was given in \cite{KW}.

\begin{definition}\label{labels}
Given a plabic graph $G$ with $n$ boundary vertices,
start at each boundary
vertex $i$ and label every edge along trip $T_i$ with $i$.
After doing this for each boundary vertex,
each edge will be labeled by up to two numbers (between $1$ and $n$).
If an edge is labeled by two numbers $i<j$,  write $[i,j]$
on that edge.

We say that a  plabic graph has the
\emph{resonance property}, if after labeling edges as above,
the set $E$ of edges incident to a given vertex  has
the following property:
\begin{itemize}
\item  there exist numbers $i_1<i_2<\dots<i_m$ such that when
we read the labels of $E$,  we see the labels
$[i_1,i_2],[i_2,i_3],\dots,[i_{m-1},i_m],[i_1,i_m]$ appear in
clockwise order.
\end{itemize}
\end{definition}

This property and the following characterization
of reduced plabic graphs were given in \cite{KW}.

\begin{theorem}\cite[Theorem 10.5]{KW}
A plabic graph is reduced
if and only if it has the resonance property.
\end{theorem}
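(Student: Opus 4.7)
The plan is to analyze the resonance condition locally, reduce to the trivalent case, and then tie local resonance to the global non-reducibility condition by examining what happens at the two vertices of a hypothetical bigon.

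First I would reduce to the case of a trivalent plabic graph. Using moves (M2) and (M3) one can expand any internal vertex of degree greater than three into a binary tree of trivalent vertices of the same color, without affecting either reducedness (by the definition of move-equivalence) or the resonance property at the original vertex: resonance at a degree-$d$ vertex is easily seen to be equivalent to resonance at each trivalent vertex produced by the expansion. Thus I may assume every internal vertex is trivalent.

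Next I would carry out the local computation at a trivalent vertex. At a black trivalent vertex with edges $e_1,e_2,e_3$ listed clockwise, the rule ``turn right at black'' pairs incoming trips with outgoing edges by a fixed cyclic rotation of the three edges; the analogous statement holds at a white vertex via ``turn left''. Writing out the resulting edge labels, if the three trips passing through are $T_i, T_j, T_k$ with $i < j < k$, then the clockwise sequence of labels on $e_1,e_2,e_3$ is a cyclic rotation of $[i,j], [j,k], [i,k]$, which is exactly the resonance pattern for $m=3$. Consequently, resonance at a trivalent vertex is equivalent to the three indices $i,j,k$ being distinct, i.e.\ to the trip $T_i$ never visiting the same vertex twice along its three incidences. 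The forward direction (reduced implies resonance) then follows immediately from the standard fact (implicit in Postnikov's theory) that in a reduced plabic graph no trip can pass through the same vertex twice.

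For the reverse direction I would argue contrapositively. If $G$ is not reduced, then its move-equivalence class contains a graph $G'$ to which (R1) applies, i.e.\ $G'$ contains a bigon formed by two trivalent vertices of opposite colors joined by a pair of parallel edges. A direct inspection of the trip labels around such a bigon shows that each of its two vertices is incident only to edges carrying two distinct trip indices, so resonance (which requires at least three) fails at both of them. It remains to verify that each of the moves (M1)--(M3) preserves the resonance property, so that a resonance failure in $G'$ descends to a resonance failure in $G$. I expect (M1), the square move, to be the main obstacle here: it simultaneously repaints four trivalent vertices and redistributes the labels on the eight edges incident to the square, so preservation of resonance requires a careful case analysis of the surrounding trip labels. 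Moves (M2) and (M3) preserve resonance almost tautologically, since they only merge or split vertices within a single color class and do not alter the pairing of trips through the resulting vertices. Once (M1) is settled, the contrapositive is complete.
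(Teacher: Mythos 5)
The theorem you were asked to prove is quoted in the paper from \cite{KW} without proof, so there is no internal argument to compare against; judged on its own terms, your proposal has a genuine gap at its central step. You claim that at a trivalent vertex the clockwise edge labels are automatically a cyclic rotation of $[i,j],[j,k],[i,k]$, so that resonance there is equivalent to the three trips through the vertex being distinct, and hence that the whole theorem reduces to ``reduced $\Leftrightarrow$ no trip passes through a vertex twice.'' This is false: the resonance property also prescribes the clockwise \emph{cyclic order} of the three pairs, namely $[i_1,i_2],[i_2,i_3],[i_1,i_3]$ with $i_1<i_2<i_3$, and the opposite cyclic order $[i_1,i_2],[i_1,i_3],[i_2,i_3]$, which is equally compatible with three distinct trips, violates it. A concrete counterexample to your local claim: take a $4$-cycle $v_1v_2v_3v_4$ with $v_1,v_2$ black and $v_3,v_4$ white, and attach a leg from $v_a$ to boundary vertex $a$, with $1,2,3,4$ in clockwise order. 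Computing trips with the conventions of the paper, one gets $T_1\colon 1\to 2$, $T_2\colon 2\to 1$, $T_3\colon 3\to 4$, $T_4\colon 4\to 3$; no trip visits any internal vertex twice, and the three trips through each internal vertex are pairwise distinct. Yet this graph is not reduced (contract the unicolored edges $v_1v_2$ and $v_3v_4$ by (M2) and uncontract again to exhibit two trivalent vertices of opposite color joined by parallel edges, so (R1) applies), and resonance indeed fails, but only through the ordering condition: at the black vertex $v_2$ the clockwise labels read $[1,2],[1,4],[2,4]$, which is not a cyclic rotation of $[1,2],[2,4],[1,4]$. So the cyclic-order content of resonance, which your argument discards, is exactly what detects non-reduced configurations of this kind (in Postnikov's language, bad double crossings), and your forward direction ``reduced $\Rightarrow$ resonance'' does not follow from the fact that trips in reduced graphs do not revisit vertices.

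The converse direction is also incomplete as written: you defer precisely the hard point, namely that the square move (M1) preserves the resonance property, and with the corrected local picture the ``almost tautological'' invariance under (M2)/(M3) and the reduction to trivalent graphs also require tracking cyclic orders of labels, not just sets of trip indices. For orientation, the proof in \cite{KW} does not proceed by move-invariance at all: it deduces the equivalence from Postnikov's trip-based characterization of reducedness (no round trips, no essential self-intersections, no bad double crossings), which is where the ordering information carried by resonance is actually used. If you want to salvage your strategy, you would need (i) the correct statement of local resonance at a trivalent vertex (distinct trips entering in cyclically increasing order around the vertex) and (ii) a full verification that (M1)--(M3) preserve this refined condition, neither of which is in the current write-up.
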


\subsection{BCFW bridge decompositions}

Given a reduced plabic graph, it is easy to construct its corresponding
trip permutation, as explained in Definition 
\ref{gen:trip}.  But how can we go backwards, i.e. given the permutation, how can we construct 
a corresponding reduced plabic graph?  One procedure to do this was given in 
\cite[Section 20]{Postnikov}.  Another elegant solution -- the 
\emph{BCFW-bridge construction} -- was given in 
\cite[Section 3.2]{Scatt}.

To explain the BCFW-bridge construction, we use
the representation of each decorated permutation $\pi$ as an affine permutation
$\tilde{\pi}$,
see Definition \ref{def:affine}.
We say that position $i$ of $\tilde{\pi}$ is a \emph{fixed point}
if $\tilde{\pi}(i) = i$ or 
$\tilde{\pi}(i) = i+n$.  And we say that 
$\tilde{\pi}$ is a \emph{decoration of the identity} if it has a fixed point
in each position.

\begin{definition}[\emph{The BCFW-bridge construction}]\label{def:BCFW}
Given a decorated permutation $\pi$ on $n$ letters, which is not 
simply a decoration of the identity, we start by  choosing
a pair of adjacent positions $i<j$ such that 
$\tilde{\pi}(i)<
\tilde{\pi}(j)$, and 
$i$ and $j$ are not fixed points.
Here \emph{adjacent} means that either
$j=i+1$, or $i<j$ and every position $h$ such that $i<h<j$ is a fixed point.
We record the transposition $\tau= (ij)$ and swap the entries in positions
$i$ and $j$ in $\tilde{\pi}$.  Any entries in the resulting permutation
which are fixed points are designated as \emph{frozen}, and henceforth ignored.
We continue this process on the resulting affine permutation,
until the end result is a decoration of the identity.
Finally we use the sequence of transpositions $\tau$ as a recipe
for adding ``bridges," thereby constructing a plabic graph.
\end{definition}

See Table \ref{table:bridges} and Figure \ref{fig:bridges}
for an example of a bridge decomposition of the 
permutation $\pi=(4,6,5,1,2,3)$ (with corresponding
affine permutation $\tilde{\pi} = (4,6,5,7,8,9)$).
Here a \emph{bridge} is the subgraph shown
at the left of Figure \ref{fig:bridges}, and a bridge decomposition
is a graph built by attaching successive bridges.
The sequence of transpositions
$\tau$ gives a recipe for where to place successive bridges.

\begin{center}
        \begin{table}[h]
                \begin{tabular}{| l | l |}
                \hline
                 & \ 1 \ 2 \ 3 \ 4 \ 5 \ 6 \\
                \ $\tau$ & \ $\downarrow$ \ $\downarrow$ \ $\downarrow$ \ $\downarrow$ \ $\downarrow$ \ $\downarrow$ \\
                \hline
               $(34)$  & \ 4 \ 6 \ 5 \ 7 \ 8 \ 9 \\
               $(23)$ & \ 4  \ 6 \  7 \ 5 \ 8 \ 9 \\ 
               $(12)$ & \ 4 \ 7 \ 6 \ 5 \ 8 \ 9 \\
               $(56)$ & \framebox{7}  4 \ 6 \ 5 \ 8 \ 9 \\
               $(45)$ & \framebox{7}  4 \ 6 \ 5 \ 9 \ 8 \\
               $(34)$ & \framebox{7}  4 \ 6 \ 9 \framebox{5}  8 \\
               $(46)$ & \framebox{7}  4 \framebox{9}  6 \framebox{5}  8 \\
               $(24)$ & \framebox{7}  4 \framebox{9}  8 \framebox{5}\framebox{6} \\
                \hline
                      & \ 7 \ 8 \ 9 \ 4 \ 5 \ 6 \\
                \hline
                \end{tabular}
\vspace{.5cm}
       \caption{A BCFW-bridge decomposition of $\tilde{\pi} = (4,6,5,7,8,9)$.
The frozen entries are boxed.}
       \label{table:bridges}
        \end{table}
\end{center}

\begin{proposition}\cite[Section 3.2]{Scatt}
Given a decorated permutation $\pi$,
the BCFW-bridge construction constructs a reduced plabic graph
whose trip permutation is $\pi$.
\end{proposition}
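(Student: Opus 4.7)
The plan is to argue by induction on the number of transpositions produced by the BCFW algorithm, which is the same as the number of bridges glued together. In the base case, $\pi$ is a decoration of the identity, and the algorithm produces no transpositions; the corresponding ``graph'' is just $n$ boundary vertices, each carrying the colored leaf prescribed by its fixed point type. Its trip permutation is exactly $\pi$, and it is trivially reduced. For the inductive step, suppose the first step of the algorithm selects an adjacent pair $i<j$ with $\tilde\pi(i)<\tilde\pi(j)$ and records $\tau=(ij)$. Let $\pi'$ denote the decorated permutation whose affine lift is obtained from $\tilde\pi$ by swapping the entries in positions $i$ and $j$. The algorithm run on $\pi'$ uses one fewer transposition, and by inductive hypothesis produces a reduced plabic graph $G'$ with $\pi_{G'}=\pi'$. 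The graph $G$ produced for $\pi$ is $G'$ with a single bridge attached in positions $i,j$.

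The first thing to verify is $\pi_G=\pi$. This is a direct application of the rules of the road. A BCFW bridge between positions $i$ and $j$ consists of two trivalent vertices of opposite colors connected by a single edge, with one tail attached to the $i$-th boundary strand of $G'$ and the other tail attached to the $j$-th boundary strand of $G'$. Because $i<j$ are adjacent (every intermediate position is a frozen fixed point, hence a boundary leaf that is not traversed) and because the two bridge vertices have opposite colors, the trip starting at $i$ in $G$ traverses the new bridge edge and emerges onto the old strand originally attached at boundary vertex $j$, thereafter following $T_j^{G'}$ to its endpoint $\pi'(j)$. Symmetrically, the trip starting at $j$ in $G$ emerges onto the old strand at $i$ and ends at $\pi'(i)$. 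All other trips in $G$ coincide with the corresponding trips in $G'$, because they do not see the bridge. Hence $\pi_G$ is obtained from $\pi'$ by swapping the images of $i$ and $j$, which is exactly $\pi$.

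The second thing to verify is that $G$ is reduced. The cleanest route is to use the resonance characterization of [KW, Theorem 10.5]: since $G'$ has the resonance property by induction, we check that attaching a single bridge in positions $i,j$ preserves the property at the two newly created trivalent vertices. The edge labels at each new vertex are determined by the labels of the incoming strands at boundary positions $i$ and $j$ (in $G'$ these strands were labeled $\{i,\pi'^{-1}(i)\}$ and $\{j,\pi'^{-1}(j)\}$ appropriately), and the assumption $\tilde\pi(i)<\tilde\pi(j)$ together with the adjacency of $i,j$ produces exactly the cyclic order of labels required by Definition \ref{labels}. Alternatively, one may argue dimensionally: the condition that $i,j$ are adjacent with $\tilde\pi(i)<\tilde\pi(j)$ is precisely a cover relation of the affine Bruhat order (so the positroid cell indexed by $\pi$ has dimension one greater than that of $\pi'$), the number of BCFW transpositions therefore equals the dimension of the cell, and this equals the number of faces minus one of any reduced plabic graph for $\pi$, forcing $G$ to be reduced.

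The main obstacle is precisely the reducedness verification in the inductive step: one must rule out that the newly attached bridge could be removed (possibly after some sequence of moves (M1)--(M3)) by a parallel edge reduction (R1). This is why the BCFW selection rule $\tilde\pi(i)<\tilde\pi(j)$ is crucial; the opposite inequality would correspond to an anti-covering relation of the affine Bruhat order, and attaching a bridge there would create a ``wasted'' pair of parallel strands detectable by (R1). Verifying this cleanly either via the resonance bookkeeping of Definition \ref{labels} or via the length/dimension identity for the affine Bruhat cover is the heart of the argument; everything else is bookkeeping.
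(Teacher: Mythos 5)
The paper does not actually prove this proposition; it is imported from \cite{Scatt}, so there is no internal argument to compare yours with, and your proof has to stand on its own. Your inductive framework is the natural one, and the trip-permutation half is essentially correct: peeling off the first recorded transposition $(i\,j)$, attaching the corresponding bridge between legs $i$ and $j$ next to the boundary of the graph $G'$ built for the once-swapped permutation, and checking via the rules of the road that the trips starting at $i$ and $j$ cross the bridge and continue along the old trips (the frozen legs between $i$ and $j$ end in lollipops inside the pocket between the bridge and the boundary, so planarity and the other trips are undisturbed) does give $\pi_G=\pi'\circ(i\,j)=\pi$, provided you fix the bridge's color convention, which you leave implicit.

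The genuine gap is in the reducedness step, which you yourself identify as the heart of the matter but do not carry out. Checking the resonance property only ``at the two newly created trivalent vertices'' is not sufficient: attaching the bridge changes the edge labels of Definition \ref{labels} globally, not locally. In $G$ the trip starting at $i$ crosses the bridge and then runs along the \emph{old} trip $T_j$ of $G'$, and vice versa, so on every edge above the bridge that was traversed by $T_i'$ or $T_j'$ the labels $i$ and $j$ are exchanged. At a vertex of $G'$ visited by only one of these two trips this relabeling is order-preserving (no trip label strictly between $i$ and $j$ can occur there, since those boundary positions are lollipops), so resonance survives; but at a vertex where $T_i'$ and $T_j'$ both pass, exchanging $i$ and $j$ can destroy the required clockwise pattern, and ruling this out is exactly where the hypothesis $\tilde\pi(i)<\tilde\pi(j)$ must be used as a \emph{global} constraint on how the two old trips can meet. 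Your proposal never makes this argument. The alternative ``dimensional'' route is likewise only a sketch: it needs (a) a proof that the BCFW swap is a cover in the affine Bruhat order (you assert it), (b) that each bridge adds exactly one face, and (c) a criterion of the form ``a plabic graph with trip permutation $\pi$ whose number of faces equals $\dim+1$ is reduced,'' which is not among the results quoted in this paper and would itself require proof or a precise citation. As it stands, the proposal establishes that the construction produces a plabic graph with the right trip permutation, but not that the graph is reduced.
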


\begin{figure}[h]
\centering
\hspace{1cm}
\includegraphics[height=1in]{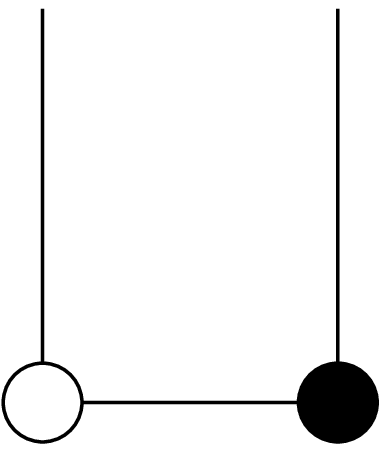}\hspace{1in}
\includegraphics[height=1.5in]{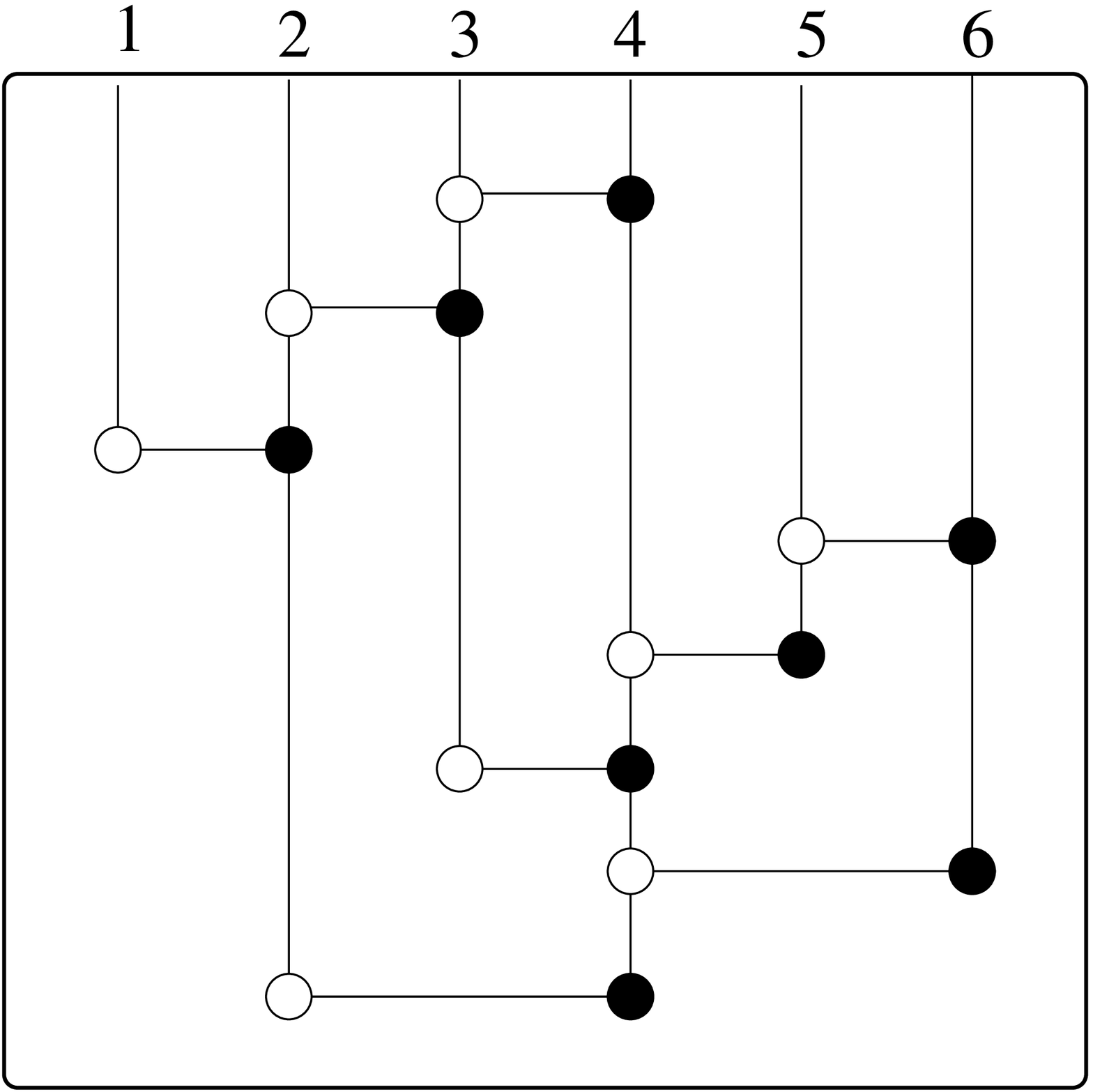}
\caption{A single bridge, and the plabic graph $G$ associated to the bridge decomposition from Table \ref{table:bridges}.  Note that the trip permutation of $G$
equals the product $(24)(46)(34)(45)(56)(12)(23)(34)$ of transpositions
$\tau$ in the bridge decomposition, which equals 
the corresponding decorated permutation $\pi = (4,6,5,1,2,3)$.}
\label{fig:bridges}
\end{figure}

\section{Bridge polytopes and their one-skeleta}

The goal of this section is to introduce some 
polytopes that we will call  
\emph{bridge polytopes}, because their one-skeleta encode 
BCFW-bridge decompositions of reduced plabic graphs.  This statement is 
analogous to the fact that 
the one-skeleton of the permutohedron encodes
reduced decompositions of $w_0$.  More specifically,
for each $k$-element subset $J$ of $[n]$, we will introduce
a bridge polytope $\Br_J$ which encodes bridge decompositions
of reduced plabic graphs for the permutation $\pi(J)$
labeling the corresponding TP Schubert cell.
In the case that $J = \{1,2,\dots,k\}$,
we will also denote $\Br_J$ by $\Br_{k,n}$ -- this will be the polytope 
encoding bridge decompositions of $\pi_{k,n}$,
the decorated permutation labeling the totally positive Grassmannian.

\begin{figure}[h]
\centering
\includegraphics[height=5.3cm]{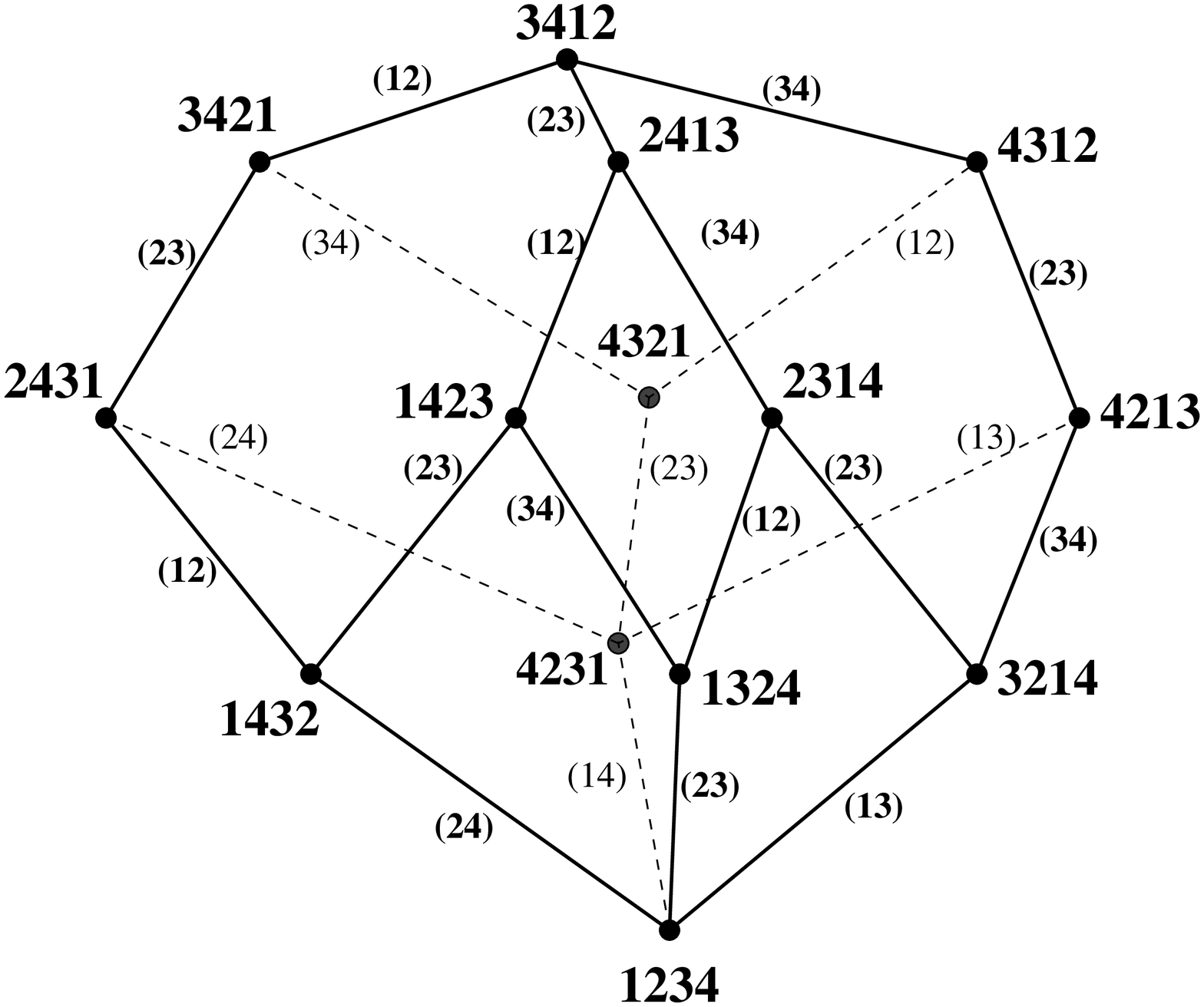} \hspace{1cm}
\includegraphics[height=5.3cm]{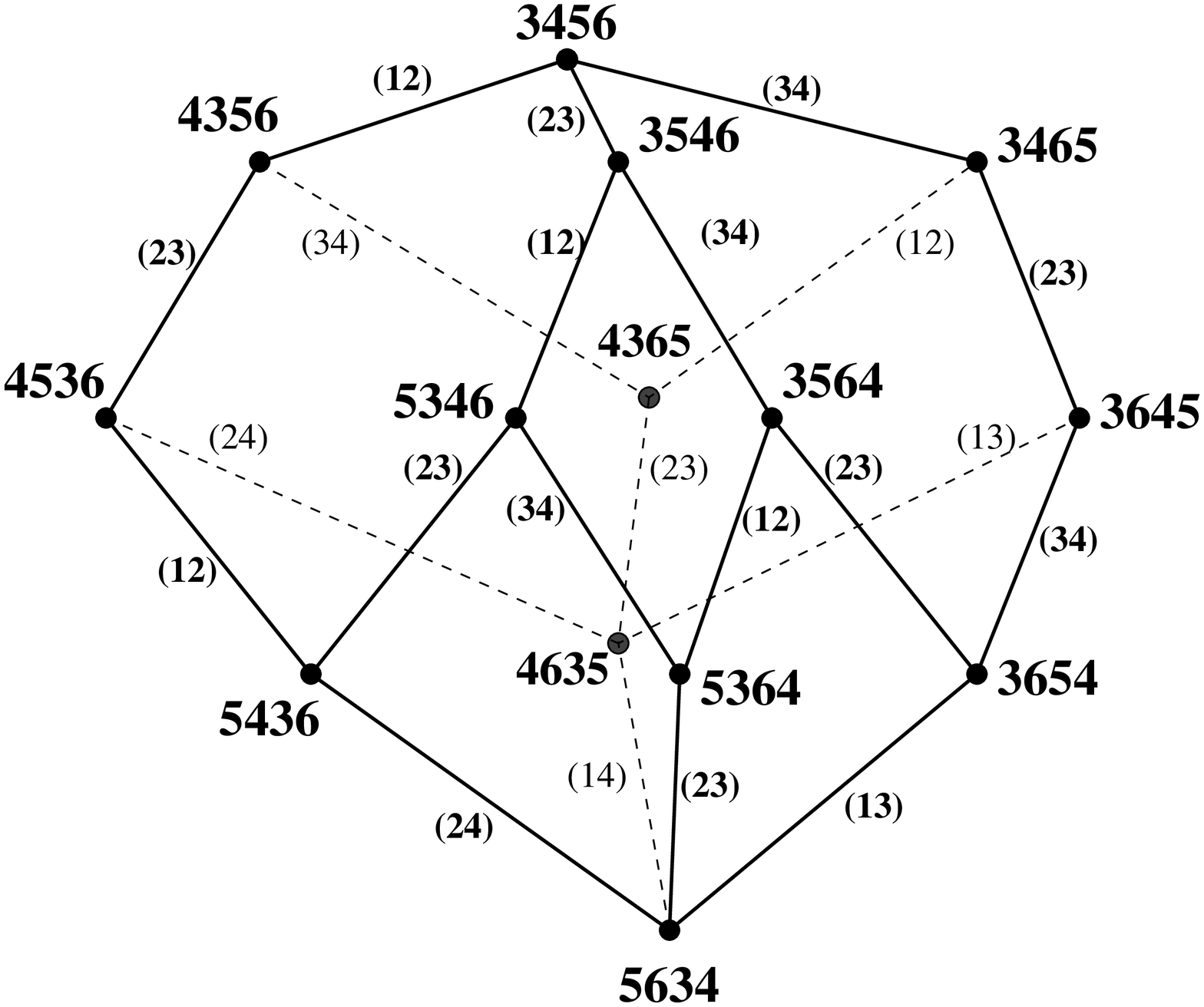}
\caption{Two labelings of a bridge polytope.  At the left,
vertices are labeled by ordinary permutations, and edges are labeled by 
the pair of \emph{values} which are swapped.  At the right,
vertices are labeled by affine permutations, and edges are labeled by 
the pair of \emph{positions} which are swapped.  
The map from vertex-labels at the left to vertex-labels at the right is 
$z \mapsto \widetilde{z^{-1}}$.
In both cases, the paths along the one-skeleton 
from top to bottom encode bridge decompositions of the permutation
$\pi_{2,4} = (3,4,1,2)$.}
\label{fig:scamp}
\end{figure}

Before giving the general construction, we present an example in Figure 
\ref{fig:scamp}, which encodes the bridge decompositions of plabic graphs with
trip permutation 
$\pi = (3,4,1,2)$.  The figure at the left shows the polytope 
obtained by taking the convex hull of the permutations
$$\{(z_1,z_2,z_3,z_4) \ \vert \ z_1 \geq 1, z_2 \geq 2, z_3 \leq 3, z_4 \leq 4)\}.$$
The edge between two permutations is labeled by the 
\emph{values} of the swapped entries.
The figure at the right shows the same polytope, but now vertices are labeled
by affine permutations.  A vertex which was labeled by $z$ at the left
is labeled at the right by the affinization $\widetilde{z^{-1}}$ of $z^{-1}$.
Note that we took the inverse in order to get a vertex-labeling of 
the polytope such that edges correspond to the \emph{positions}
of the swapped entries, agreeing with the sequence of 
transpositions defined in Definition \ref{def:BCFW}.

The  main result of this section will imply
that the minimal chains in the one-skeleton 
of the bridge polytope from ``top" to ``bottom"
(from $(3,4,1,2)$ to $(1,2,3,4)$ using the vertex-labeling at the left,
and from $(3,4,5,6)$ to $(5,6,3,4)$ using the vertex-labeling at the right)
are in bijection with the bridge decompositions of the reduced plabic graphs
with trip permutation $\pi_{2,4} = (3,4,1,2)$.
For example,  the leftmost chain from top to bottom is labeled by 
the sequence of transpositions $(12)$, $(23)$, $(12)$, $(24)$;  the 
corresponding plabic graph is shown in Figure \ref{fig:bridge2}.

\begin{figure}[h]
\centering
\includegraphics[height=3.3cm]{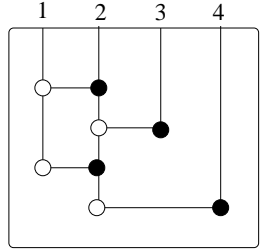} 
\caption{The plabic graph coming from the sequence of bridges
$(12)$, $(23)$, $(12)$, $(24)$.  Note that its trip permutation 
is $(3,4,1,2)=(24)(12)(23)(12)$.}
\label{fig:bridge2}
\end{figure}


We now turn to the general construction.

\begin{definition}\label{def:bridge}
Let $J \subset \{1,2,\dots, n\}$ and set
\begin{equation}
S_J = \{\pi \in S_n \ \vert \ \pi(j) \geq j \text{ for }j\in J
\text{ and }\pi(j) \leq j \text{ for }j \notin J.\}
\end{equation}
In other words, any permutation in $S_J$ is required to have a
\emph{weak excedance} in position $j$ for each $j\in J$, and a 
\emph{weak non-excedance} in position $j$ for each $j\notin J$.

We define a polytope $\G_J$ by
\begin{equation}
\G_J = \conv\{ (\pi(1),\pi(2),\dots, \pi(n)) \ \vert \ \pi\in S_J\}
\subset \R^n.
\end{equation}
\end{definition}

In the special case that $J = \{1,2,\dots, k\}$, we write
$$\G_{k,n} = \G_J = \{\pi \in S_n \ \vert \ 
\pi(j) \geq j \text{ for } 1 \leq j \leq k \text{ and }
\pi(j) \leq j \text{ for } k+1 \leq j \leq n.\}$$

Recall the definition of $\pi(J)$ from Definition 
\ref{def:TPSchubert}, the decorated permutation labeling the 
corresponding TP Schubert cell.  Let $e(J)$ be the decoration
of the identity which has counterclockwise fixed points 
precisely in positions $J$.

The following is our main result.  We describe it using 
the vertex-labeling of the bridge polytope with ordinary 
permutations, as in Definition \ref{def:bridge},
but it can be easily translated using 
the vertex-labeling by affine permutations.

\begin{theorem}\label{thm:main}
Let $J$ be a $k$-element subset of $[n]$.
The shortest paths from $\pi(J)$ to the identity permutation $e$
along the one-skeleton of the bridge polytope $\G_J$ 
are in bijection with the BCFW-bridge decompositions of 
the permutation $\pi(J)^{-1}$, where a sequence of edge-labels
in a path is interpreted as a sequence of transpositions $\tau$ in the 
bridge decomposition.  Equivalently,
there is an edge between two vertices $\pi$ and $\hat{\pi}$ of 
the bridge polytope $\G_J$ if and only if there exists some
pair  $i<\ell$ such that
$(i \ell) \pi = \hat{\pi}$, and 
$\pi(j) = \hat{\pi}(j) = j$ for $i<j<\ell$.
In other words, $\pi$ and $\hat{\pi}$ differ by 
swapping the values $i$
and $\ell$, 
and $i+1, i+2, \dots, \ell-1$ are fixed points
of both $\pi$ and $\hat{\pi}$.
\end{theorem}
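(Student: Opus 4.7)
The plan is to reduce the theorem to the known theory of Bruhat interval polytopes, as hinted at in the abstract. Under the vertex-relabeling $z \mapsto \widetilde{z^{-1}}$ displayed in Figure \ref{fig:scamp}, the vertices of $\G_J$ correspond to the bounded affine permutations $\{\widetilde{\pi^{-1}} : \pi \in S_J\}$. My first step is to show that this set is precisely the interval $[\widetilde{e(J)}, \widetilde{\pi(J)^{-1}}]$ in the affine (equivalently, circular) Bruhat order on bounded affine permutations, and that the resulting vertex bijection promotes to a combinatorial isomorphism between $\G_J$ and the corresponding Bruhat interval polytope.

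Granted this identification, I would apply the known edge characterization for Bruhat interval polytopes: two elements $x, y$ of an interval $[u,v]$ give an edge of the associated polytope if and only if $y = tx$ for some (affine) reflection $t$, and the sub-interval of $[u,v]$ lying between $x$ and $y$ is a chain in Bruhat order. Translated back to $\G_J$ via the inverse-affinization, the reflection becomes a value-transposition $(i\ell)$ satisfying $(i\ell)\pi = \hat\pi$, and the chain condition becomes exactly the requirement that the intermediate values $i+1, \ldots, \ell-1$ are fixed points of both $\pi$ and $\hat\pi$, since any non-fixed intermediate value would produce additional cover relations inside the sub-interval and break the chain property.

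For the path-to-bridge correspondence, shortest paths from $\pi(J)$ to $e$ in the one-skeleton of $\G_J$ have exactly $\ell(\pi(J))$ edges (the graded rank of the interval under the identification). Reading off the edge labels along such a path yields a sequence of value-transpositions; under the inverse-affinization this becomes a sequence of position-transpositions of bounded affine permutations, and the edge condition translates exactly to the BCFW adjacency condition of Definition \ref{def:BCFW}. An induction on $\ell(\pi(J))$, peeling off the first bridge of any BCFW decomposition and matching it with the first edge of a shortest path, then yields the desired bijection with BCFW bridge decompositions of $\pi(J)^{-1}$.

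The main obstacle is the identification of the vertex set with an affine Bruhat interval. Proving $\{\widetilde{\pi^{-1}} : \pi \in S_J\} = [\widetilde{e(J)}, \widetilde{\pi(J)^{-1}}]$ requires understanding how the affine Bruhat order interacts with the weak-excedance conditions defining $S_J$. The inclusion from right to left should follow from checking that affine Bruhat covers preserve the excedance pattern inherited from $J$, while the reverse inclusion can be established by inductively constructing a chain from $\widetilde{\pi(J)^{-1}}$ down to each $\widetilde{\pi^{-1}}$; the BCFW algorithm itself is a natural candidate for this construction. Once this identification and the resulting edge description are in hand, both assertions of the theorem follow largely formally from the edge characterization of Bruhat interval polytopes and from matching minimal chains with the BCFW recipe in Definition \ref{def:BCFW}.
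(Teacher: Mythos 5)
The central step of your plan---the ``known edge characterization for Bruhat interval polytopes'' asserting that $x,y$ in an interval span an edge if and only if $y=tx$ for a reflection $t$ and the subinterval between them is a chain---is not a theorem, and it is false, already in the very polytopes at issue. The chain condition (equivalently, since $x$ and $y$ differ by a reflection, the condition that $x\lessdot y$ is a Bruhat cover) is necessary but not sufficient. In $\Perm_3=\mathsf{Q}_{e,w_0}$ the cover $s_1=(2,1,3)\lessdot s_1s_2=(2,3,1)$ is a long diagonal of the hexagon, not an edge. More to the point, take $J=\{1,2\}$, $n=4$, so $\G_J\cong \mathsf{Q}_{e,(3,4,1,2)}$: the vertices $\pi=(3,4,2,1)$ and $\hat\pi=(1,4,2,3)$ of $\G_J$ differ by the value transposition $(1\,3)$, and under the identification of Lemma \ref{lem:bridge-Bruhat} they correspond to $x=(2,1,3,4)\lessdot y=(2,3,1,4)$, a Bruhat cover inside $[e,(3,4,1,2)]$, so your criterion predicts an edge; but the intermediate value $2$ is not a fixed point of $\pi$, and indeed there is no edge, since the midpoint $(2,4,2,2)$ of the segment $[\pi,\hat\pi]$ is also the midpoint of the two other vertices $(2,4,3,1)$ and $(2,4,1,3)$. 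Your sentence claiming that ``the chain condition becomes exactly the requirement that the intermediate values are fixed points'' fails for the same reason: a cover only forbids intermediate \emph{values} sitting positionally between $i$ and $\ell$, which is strictly weaker than the fixed-point condition of Theorem \ref{thm:main}. So the hard content of the theorem---deciding \emph{which} transposition pairs give edges---cannot be outsourced to Bruhat-order combinatorics of the interval alone.

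The paper's route shares your starting point but supplies exactly what your plan is missing. It uses the elementary finite identification $\G_J\cong\mathsf{Q}_{e,\pi(J)^{-1}}$ (Lemma \ref{lem:bridge-Bruhat}, via a coordinate permutation; no affine Bruhat interval is needed, which also removes your self-declared ``main obstacle''), imports from the literature only the fact that every edge of a Bruhat interval polytope joins two vertices differing by a transposition (Lemma \ref{lem:edge-swap}), and then proves the two nontrivial directions geometrically: Proposition \ref{prop:fixed} shows necessity of the fixed-point condition by taking a supporting linear functional $\lambda$ for a putative edge and exhibiting a vertex of $S_J$ with strictly larger $\lambda$-value whenever an intermediate value is not fixed; Theorem \ref{th:edgeexist} shows sufficiency by writing down an explicit functional with coordinates $1,n^2,\dots,n^{2(n-1)}$ and a greedy-algorithm argument showing it is maximized exactly at $\pi$ and $\hat\pi$. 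Some such polytope-level argument (supporting hyperplanes, or an equivalent) is unavoidable, and your proposal does not contain one; as written, the reduction in your second paragraph is where the proof breaks.
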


We will prove Theorem \ref{thm:main} in Section \ref{sec:Bruhat}.

The following corollary is immediate from Theorem \ref{thm:main}.

\begin{corollary}
The shortest paths
in the bridge polytope $\G_{k,n}$ from $\pi_{k,n}$ down to the identity $e$ are in bijection 
with the BCFW-bridge decompositions of the permutation 
$\pi_{n-k,n}$.  In other words, we can read off all 
BCFW-bridge decompositions of $\pi_{n-k,n}$ by 
recording the edge-labels on all shortest paths from $\pi_{k,n}$
to $e$ in the one-skeleton of 
$\G_{k,n}$.
\end{corollary}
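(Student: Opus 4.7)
The plan is to obtain this corollary as a direct specialization of Theorem \ref{thm:main} to the particular subset $J = \{1,2,\dots,k\}$, followed by a short index computation showing $\pi(J)^{-1} = \pi_{n-k,n}$. Since Definition \ref{def:bridge} already gives $\G_J = \G_{k,n}$ in this case, the only substantive work is to identify the two distinguished permutations appearing in the theorem.

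First I would unpack $\pi(J)$ via Definition \ref{def:TPSchubert}. With $J = \{j_1 < \dots < j_k\} = \{1,2,\dots,k\}$ and $J^c = \{h_1 < \dots < h_{n-k}\} = \{k+1,\dots,n\}$, the defining rules $\pi(h_i) = i$ and $\pi(j_i) = n-k+i$ give $\pi(J)(i) = n-k+i$ for $1 \le i \le k$ and $\pi(J)(i) = i-k$ for $k+1 \le i \le n$. This is exactly the one-line notation $(n-k+1,\dots,n,1,\dots,n-k)$ of $\pi_{k,n}$.

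Next I would compute the inverse. The permutation $\pi_{k,n}$ sends the block of positions $\{1,\dots,k\}$ bijectively onto the block of values $\{n-k+1,\dots,n\}$, and sends $\{k+1,\dots,n\}$ onto $\{1,\dots,n-k\}$, each block in increasing order. Inverting swaps the roles of positions and values, so $\pi_{k,n}^{-1}(j) = j+k$ for $1 \le j \le n-k$ and $\pi_{k,n}^{-1}(j) = j-(n-k)$ for $n-k+1 \le j \le n$; reading this in one-line notation yields $(k+1,k+2,\dots,n,1,2,\dots,k)$, which matches the definition of $\pi_{n-k,n}$.

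Finally I would invoke Theorem \ref{thm:main} with this $J$: the shortest paths from $\pi(J) = \pi_{k,n}$ to $e$ along the one-skeleton of $\G_J = \G_{k,n}$ are in bijection with the BCFW-bridge decompositions of $\pi(J)^{-1} = \pi_{n-k,n}$, with edge-label sequences reading off the transpositions $\tau$. This is exactly the statement of the corollary. There is no real obstacle here; the whole argument is a citation of Theorem \ref{thm:main} plus the inversion identity $\pi_{k,n}^{-1} = \pi_{n-k,n}$, and the mildest bookkeeping risk is keeping the position/value distinction straight when taking the inverse.
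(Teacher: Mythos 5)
Your proof is correct and follows exactly the route the paper intends: the paper states the corollary is immediate from Theorem \ref{thm:main}, and your only added content is the (correct) bookkeeping that for $J=\{1,\dots,k\}$ one has $\pi(J)=\pi_{k,n}$ and $\pi(J)^{-1}=\pi_{n-k,n}$.
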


\section{Bruhat interval polytopes and the proof of Theorem 
\ref{thm:main}}\label{sec:Bruhat}

Bruhat interval polytopes are a class of polytopes which were recently
studied by Kodama and the second author in \cite{KW3}, 
in connection with the full Kostant-Toda lattice on the 
flag variety.  The combinatorial properties of Bruhat interval
polytopes were further investigated by Tsukerman and the second author
in \cite{TW}.  We will show that bridge polytopes 
are isomorphic to certain Bruhat interval polytopes,
and use a result from \cite{KW3} as a tool for proving 
Theorem \ref{thm:main}.  We will also deduce 
a characterization of the one-skeleta of a large class of 
Bruhat interval polytopes.

\begin{definition}\label{def:BIP}
Let $u,v \in S_n$ such that $ u \leq v $ in (strong) Bruhat
order.  
The \emph{Bruhat interval polytope}
$\mathsf{Q}_{u,v}$ is defined as the convex hull
$$\mathsf{Q}_{u,v}
= \conv\{(z(1),\dots,z(n)) \ \vert \ \text{ for }z\in S_n \text{ such that }
u \leq z \leq v\}.$$
\end{definition}

Note that when $u$ is the identity and $v = w_0$,
the longest permutation in $S_n$, the Bruhat interval polytope
$\mathsf{Q}_{u,v}$ equals the permutohedron $\Perm_n$.

\begin{lemma}\label{lem:bridge-Bruhat}
The bridge polytope $\G_J$ is isomorphic to the Bruhat interval polytope
$\mathsf{Q}_{e,\pi(J)^{-1}}$.  
More specifically, 
recall that for $J$ a $k$-element subset of $[n]$, we write 
$J = \{j_1 < \dots < j_k\}$, and $J^c = \{h_1 < \dots < h_{n-k}\}$.
Consider the map $\psi:\R^n \to \R^n$ defined by 
$\psi(x_1,\dots,x_n) = (x_{h_1}, x_{h_2}, \dots, 
x_{h_{n-k}}, x_{j_1}, x_{j_2}, \dots, x_{j_k})$.
Then $\Psi$ is an isomorphism from 
$\G_J$ to $\mathsf{Q}_{e,\pi(J)^{-1}}$.
\end{lemma}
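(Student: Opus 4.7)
Since $\psi$ is a permutation of coordinates, it is a linear isomorphism of $\R^n$; to prove it carries $\G_J$ onto $\mathsf{Q}_{e,\sigma}$ (where $\sigma:=\pi(J)^{-1}$) it suffices to show that $\psi$ sends the vertex set of $\G_J$ bijectively onto that of $\mathsf{Q}_{e,\sigma}$. My first step would be a direct computation: since $\sigma(a)=h_a$ for $a\le n-k$ and $\sigma(n-k+b)=j_b$ for $b\le k$, one checks that for every $\pi \in S_n$,
\[
\psi\bigl(\pi(1),\dots,\pi(n)\bigr) \;=\; \bigl((\pi\sigma)(1),\dots,(\pi\sigma)(n)\bigr).
\]
The lemma therefore reduces to the set-theoretic identity
\[
\{\pi\sigma : \pi \in S_J\} \;=\; \{z \in S_n : z \le \sigma\},
\]
which I would prove by establishing both inclusions using the standard tableau criterion for (strong) Bruhat order.

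For the forward inclusion ($\pi\in S_J \Rightarrow \pi\sigma \le \sigma$), I would apply the criterion in the form $|\{a\le i: z(a)>M\}| \le |\{a\le i: \sigma(a)>M\}|$ for all $i$ and $M$. Setting $z=\pi\sigma$ and splitting the count according to whether $a\le n-k$ or $a>n-k$, the desired inequality rearranges to $\beta \le \alpha$, where $\alpha=|\{a\le n-k: h_a>M,\,\pi(h_a)\le M\}|$ and $\beta=|\{b: j_b\le M,\,\pi(j_b)>M\}|$. The defining conditions $\pi(h_a)\le h_a$ and $\pi(j_b)\ge j_b$ imply that the sets $X=\{i:i>M,\,\pi(i)\le M\}$ and $Y=\{i:i\le M,\,\pi(i)>M\}$ are contained in $J^c$ and $J$ respectively, so $\alpha=|X|$ and $\beta\le|Y|$; and $|X|=|Y|$ holds for any permutation.

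For the backward inclusion ($z\le \sigma \Rightarrow z\sigma^{-1}\in S_J$), set $\pi:=z\sigma^{-1}$, so that $\pi(h_a)=z(a)$ and $\pi(j_b)=z(n-k+b)$. Applying the tableau criterion at $(i,M)=(a,h_a)$ for $a\le n-k$ forces $z(a)\le h_a$, hence $\pi(h_a)\le h_a$. Applying it at $(i,M)=(n-k+b-1,\,j_b-1)$ for $b\le k$ shows that all $j_b-1$ values in $\{1,\dots,j_b-1\}$ must already appear among $z(1),\dots,z(n-k+b-1)$, which forces $z(n-k+b)\ge j_b$ and therefore $\pi(j_b)\ge j_b$.

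I expect the principal obstacle to be the bookkeeping in the forward inclusion: a naive termwise comparison pits two contributions against each other, because replacing $h_a$ by the smaller $\pi(h_a)$ can only increase $|\{\cdot\le M\}|$ while replacing $j_b$ by the larger $\pi(j_b)$ can only decrease it. Recognizing that the net discrepancy is governed precisely by the elementary permutation identity $|X|=|Y|$, together with the containments $X\subseteq J^c$ and $Y\subseteq J$ furnished by the $S_J$ inequalities, is the crux of the argument; the backward direction is then essentially a direct application of the tableau criterion.
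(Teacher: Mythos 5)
Your proposal is correct and follows essentially the same route as the paper: observe that $\psi$ is a coordinate permutation sending the point of $z$ to the point of $z\,\pi(J)^{-1}$, and then identify $\{\pi\sigma:\pi\in S_J\}$ with the Bruhat interval $[e,\sigma]$ for $\sigma=\pi(J)^{-1}$. The only difference is that the paper dismisses this last identification as ``not hard to show,'' whereas you carry it out in full via the tableau/dominance criterion; your bookkeeping (the reduction to $\beta\le\alpha$ and the identity $|X|=|Y|$ with $X\subseteq J^c$, $Y\subseteq J$) checks out.
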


\begin{proof}
Since $\psi$ simply permutes the coordinates of $\R^n$,
it obviously acts as an isomorphism on any polytope.  
Moreover, $\psi$ takes the vertex $(z(1),\dots, z(n))$
labeled by 
the permutation $z$ in $\G_{k,n}$ to the vertex 
labeled by the permutation $z \pi(J)^{-1}$ of 
$\mathsf{Q}_{e,\pi(J)^{-1}}$.  In particular, 
it takes the vertices $\pi(J)$ and $e$ of $\G_{k,n}$
to the vertices $e$ and $\pi(J)^{-1}$ of $\mathsf{Q}_{e,\pi(J)^{-1}}$,
respectively.
It is not hard to show that $\psi$ maps 
the set of permutations $S_J$ to the set of permutations
in the interval $[e,\pi(J)^{-1}]$.
\end{proof}

Once we have established Theorem \ref{thm:main}, 
Lemma \ref{lem:bridge-Bruhat} immediately implies the following description 
of the  one-skeleton of any
Bruhat interval polytopes $\mathsf{Q}_{e,w}$,
when $w$ is a Grassmannian permutation.  

\begin{corollary}
Let $w$ be a Grassmannian permutation in $S_n$, i.e. a permutation
with at most one descent.  Then there is an edge between 
vertices $u$ and $v$ in $\mathsf{Q}_{e,w}$ if and only if 
$u$ and $v$ satisfy the following properties:
\begin{itemize}
\item $v = (i \ell) u$
\item in the permutations $u$ and $v$,
each of the values $i+1, i+2,\dots, \ell-1$ are in precisely 
the same positions that they are in $w$.
\end{itemize}
\end{corollary}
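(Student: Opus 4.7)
The plan is to deduce the corollary directly from Theorem \ref{thm:main} by transporting its edge description along the polytope isomorphism $\psi$ supplied by Lemma \ref{lem:bridge-Bruhat}. Since the inverses of Grassmannian permutations are exactly the permutations of the form $\pi(J)$, for any Grassmannian $w \in S_n$ with descent at position $d$ there is a $k$-subset $J \subset [n]$ with $w = \pi(J)^{-1}$, obtained by taking $k = n-d$ and $J = \{w(d+1),\dots,w(n)\}$ (the case $w = e$ is handled vacuously, since $\mathsf{Q}_{e,e}$ is a point). Lemma \ref{lem:bridge-Bruhat} then identifies $\mathsf{Q}_{e,w}$ with the bridge polytope $\G_J$ through $\psi$, which acts on vertices by $z \mapsto zw$; in particular every vertex $u$ of $\mathsf{Q}_{e,w}$ is uniquely written as $u = zw$ for some $z \in S_J$, with inverse $z = uw^{-1}$.

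Next I would apply Theorem \ref{thm:main}: two vertices $z, \hat z \in S_J$ span an edge of $\G_J$ if and only if there exist $i < \ell$ with $\hat z = (i\ell) z$ and $z(j) = \hat z(j) = j$ for all $i < j < \ell$. Under $\psi$ the transposition condition transports at once: setting $v := \hat z w$, one has $v = (i\ell)\,zw = (i\ell) u$. For the fixed-value condition I would use $z(j) = u(w^{-1}(j))$, so that $z(j) = j$ is equivalent to $u(w^{-1}(j)) = j = w(w^{-1}(j))$, i.e., to $u$ agreeing with $w$ at position $w^{-1}(j)$. Thus ``$j$ is a fixed point of $z$'' in the $\G_J$-language corresponds precisely to ``the value $j$ occupies the same position in $u$ as in $w$'' in the $\mathsf{Q}_{e,w}$-language; the same translation applies verbatim to $\hat z$ and $v$. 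Since Theorem \ref{thm:main} is biconditional, both directions of the corollary follow.

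Essentially all of the content is packaged inside Theorem \ref{thm:main} and Lemma \ref{lem:bridge-Bruhat}; the only new work is the short dictionary above, converting fixed-value conditions on $z \in S_J$ into same-position-as-$w$ conditions on the interval $[e,w]$. I do not expect any geometric or combinatorial obstacle here: the main subtlety is just keeping track of left- versus right-multiplication under the isomorphism $\psi$, and the proof should fit in a few lines.
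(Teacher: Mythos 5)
Your proposal is correct and follows the same route as the paper, which states that the corollary follows immediately from Theorem \ref{thm:main} together with Lemma \ref{lem:bridge-Bruhat}; your dictionary translating the fixed-point condition on $z=uw^{-1}$ into the ``same position as in $w$'' condition is exactly the intended (and in the paper implicit) verification.
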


We now prove Theorem \ref{thm:main}, by establishing 
Lemma \ref{lem:edge-swap}, 
Proposition \ref{prop:fixed},
and Theorem \ref{th:edgeexist}, below.

\begin{lemma}\label{lem:edge-swap}
If there is an edge in $\G_J$ between $\pi$ and $\hat{\pi}$, then 
there exists some transposition $(i \ell)$ such that 
$(i \ell) \pi = \hat{\pi}$.
\end{lemma}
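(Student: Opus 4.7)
The plan is to transfer the claim to a Bruhat interval polytope via Lemma \ref{lem:bridge-Bruhat}, where an edge characterization is already available. Concretely, the isomorphism $\psi$ from that lemma simply permutes the ambient coordinates of $\R^n$, so it is an affine bijection and preserves the face lattice; in particular it carries edges to edges. Hence the hypothesis that $[\pi,\hat{\pi}]$ is an edge of $\G_J$ translates into the assertion that $[\pi\cdot\pi(J)^{-1},\,\hat{\pi}\cdot\pi(J)^{-1}]$ is an edge of the Bruhat interval polytope $\mathsf{Q}_{e,\pi(J)^{-1}}$. Then I would invoke the result of \cite{KW3, TW} that any edge of a Bruhat interval polytope $\mathsf{Q}_{u,v}$ connects two vertices $x,y$ satisfying $y = t\,x$ for a single transposition $t$. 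Applying this yields a transposition $(i\,\ell)$ with $\hat{\pi}\cdot\pi(J)^{-1} = (i\,\ell)\cdot\pi\cdot\pi(J)^{-1}$, and right-multiplying by $\pi(J)$ gives the desired identity $\hat{\pi} = (i\,\ell)\pi$.

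In the spirit of a more self-contained proof, one can attempt the following direct approach. Set $\sigma := \hat{\pi}\pi^{-1}$ (acting on values); the goal is to show $\sigma$ is a transposition. If $\sigma$ has two or more disjoint nontrivial cycles, write $\sigma = \sigma_1\sigma_2$ on disjoint supports and define $\pi_1 := \sigma_1 \pi$ and $\pi_2 := \sigma_2\pi$. At each position $p$ one checks that $\pi_i(p)$ coincides with either $\pi(p)$ or $\hat{\pi}(p)$, each of which satisfies the weak-(non-)excedance condition defining $S_J$; therefore $\pi_1,\pi_2 \in S_J$. A position-by-position verification then shows $\pi_1+\pi_2 = \pi+\hat{\pi}$ as vectors in $\R^n$. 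Since $\sigma_1,\sigma_2 \neq e$, neither $\pi_1$ nor $\pi_2$ belongs to $\{\pi,\hat{\pi}\}$, so the midpoint of the supposed edge $[\pi,\hat{\pi}]$ is also an interior point of the segment $[\pi_1,\pi_2]$ joining two other vertices of $\G_J$, contradicting the assumption that $[\pi,\hat{\pi}]$ is a face.

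The main obstacle is the remaining case, where $\sigma$ is a single cycle of length at least $3$, say $\sigma = (v_1\,v_2\,\dots\,v_r)$. For generic values $v_1,\dots,v_r$, one can check that no two distinct $S_J$-permutations $\pi_1,\pi_2$ with $\{\pi_1,\pi_2\}\cap\{\pi,\hat{\pi}\} = \emptyset$ satisfy $\pi_1+\pi_2 = \pi+\hat{\pi}$, so the elementary midpoint argument above breaks down; indeed, if one demands that $\pi_1,\pi_2$ agree with $\pi$ outside the support of $\sigma$ then the pairwise sum constraints force $\{\pi_1,\pi_2\}=\{\pi,\hat{\pi}\}$. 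This is precisely where the finer Bruhat-order structure of the ambient interval is needed, and the reduction via Lemma \ref{lem:bridge-Bruhat} to the known characterization of the one-skeleton of Bruhat interval polytopes in \cite{TW} cleanly disposes of this case as well, completing the proof.
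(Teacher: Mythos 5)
Your first paragraph is precisely the paper's own proof: Lemma \ref{lem:bridge-Bruhat} transfers the edge of $\G_J$ to an edge of the Bruhat interval polytope $\mathsf{Q}_{e,\pi(J)^{-1}}$, the edge characterization of Bruhat interval polytopes from \cite[Theorem A.10]{KW3} supplies the transposition, and the relation $y=(i\,\ell)x$ pulls back correctly since it is preserved under right multiplication by $\pi(J)$. Your additional ``self-contained'' midpoint argument does correctly eliminate the case where $\hat{\pi}\pi^{-1}$ has two or more nontrivial cycles, but, as you acknowledge, it fails for a single cycle of length at least three, so it is only a partial alternative and your proof ultimately rests on the same reduction and citation as the paper's.
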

\begin{proof}
By \cite[Theorem A.10]{KW3}, every 
edge of a Bruhat interval polytope connects two vertices
$u$ and $v$, where $v = (i \ell)$ for some transposition $(i \ell)$.
Lemma \ref{lem:edge-swap} now follows from 
Lemma \ref{lem:bridge-Bruhat}.
\end{proof}

\begin{remark}
It was shown more generally in \cite{TW} that every face of a Bruhat interval 
polytope is a Bruhat interval polytope.
\end{remark}


\begin{proposition}\label{prop:fixed}
Suppose there's an edge in $\G_J$ between $\pi$ and $\hat{\pi}$ where
$(i \ell) \pi = \hat{\pi}$ for some $i<\ell$, i.e. $\pi$ and $\hat{\pi}$
differ by swapping the values $i$ and $\ell$.  Then $i+1, i+2, \dots, \ell-1$
must be fixed points of $\pi$ and $\hat{\pi}$.
\end{proposition}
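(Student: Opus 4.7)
The plan is to argue by contradiction. Suppose that $\{\pi, \hat{\pi}\}$ is an edge of $\G_J$ with $\hat{\pi} = (i\,\ell)\pi$, and that $\pi(m) \neq m$ for some $m$ with $i < m < \ell$; I will derive a contradiction.

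First, I would eliminate the extreme cases $\pi(m) \in \{i, \ell\}$ directly from the defining constraints of $S_J$. If $\pi(m) = i$, then since $i < m$ the weak excedance condition $\pi(m) \geq m$ for $m \in J$ fails, so necessarily $m \notin J$; but then the condition $\hat{\pi}(m) \leq m$ required for $\hat{\pi} \in S_J$ is violated by $\hat{\pi}(m) = \ell > m$, a contradiction. The symmetric argument rules out $\pi(m) = \ell$. Consequently $\pi(m) = d$ for some $d \notin \{i, m, \ell\}$, and the four positions $a := \pi^{-1}(i)$, $b := \pi^{-1}(\ell)$, $c := \pi^{-1}(m)$, and $m$ are all distinct.

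Next I would transport the setup to a Bruhat interval polytope via Lemma~\ref{lem:bridge-Bruhat}: the putative edge of $\G_J$ becomes an edge of $\mathsf{Q}_{e, \pi(J)^{-1}}$ joining $y := \pi\,\pi(J)^{-1}$ to $(i\,\ell)y$. I then invoke the characterization of edges of Bruhat interval polytopes from Tsukerman--Williams~\cite{TW}, which (in spirit) says that an edge between vertices differing by a reflection $(i\,\ell)$ is obstructed whenever one can produce another vertex of the polytope realizing a proper factorization of $(i\,\ell)$ through an intermediate value. The hypothesis $\pi(m) = d \neq m$ yields precisely such an obstruction: after a short case analysis (depending on whether $c \in J$ and whether $m \in J$) one selects an intermediate permutation $\sigma$---obtained from $\pi$ either by swapping the values $i$ and $m$, or by swapping the positions $c$ and $m$, or by a three-cycle on $\{a, c, m\}$---and verifies that $\sigma \in S_J$. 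Under the isomorphism $\psi$ of Lemma~\ref{lem:bridge-Bruhat}, $\sigma$ then furnishes a vertex of $\mathsf{Q}_{e, \pi(J)^{-1}}$ in the minimal face containing $\{y, (i\ell)y\}$ distinct from these two, forcing that face to have dimension at least two and contradicting the edge hypothesis.

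The main obstacle is the case analysis verifying that the chosen intermediate $\sigma$ lies in $S_J$. The constraints $\pi, \hat{\pi} \in S_J$ already pin down useful inequalities (e.g.\ if $a \notin J$ then $\ell \leq a$, and if $b \in J$ then $i \geq b$), and the assumption $\pi(m) = d \notin \{i, m, \ell\}$ with $c \neq m$ produces exactly the extra slack needed to make at least one of the candidate $\sigma$'s feasible in each of the four $J$-type cases for $(c, m)$.
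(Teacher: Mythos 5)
There is a genuine gap at the heart of your argument: the ``obstruction principle'' you attribute to \cite{TW} is not a theorem you can invoke, and as you have stated it, it is false. The existence of a single additional vertex $\sigma\in S_J$ realizing a partial factorization of $(i\,\ell)$ does not prevent $\{\pi,\hat{\pi}\}$ from being an edge, and such a $\sigma$ need not lie in the minimal face containing the segment $[\pi,\hat{\pi}]$ (an arbitrary vertex of a polytope has no reason to lie in a prescribed face). For a concrete illustration of the failure of the principle, take the convex hull of the three points $(1,2,3)$, $(3,2,1)$, $(2,1,3)$ in $\R^3$: this is a triangle, so $(1,2,3)$ and $(3,2,1)=(1\,3)\cdot(1,2,3)$ are joined by an edge even though the ``intermediate'' vertex $(2,1,3)=(1\,2)\cdot(1,2,3)$ is present. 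What actually kills the edge is the presence of \emph{both} intermediate swaps (one of type $(i\,j)\pi$ and one of type $(j\,\ell)\pi$), or more precisely an argument that \emph{every} linear functional $\lambda$ exposing the segment is beaten: if $\lambda$ is maximized exactly on $\pi,\hat{\pi}$ then $\lambda_a=\lambda_b$ (where $a,b$ are the positions of $i,\ell$), and depending on the sign of $\lambda_c-\lambda_a$ (where $c$ is the position of the offending intermediate value) one needs a member of $S_J$ with strictly larger $\lambda$-value in each sign case. This is exactly what the paper's proof does: it chooses $j$ to be the \emph{smallest} non-fixed value in $(i,\ell)$ (this minimality is essential to verify that the candidate $(i\,j)\pi$ still satisfies the weak excedance/non-excedance constraints of $S_J$ in the case $\lambda_a>\lambda_c$), and verifies membership in $S_J$ of the appropriate modified permutation in both cases, contradicting maximality of $\lambda\cdot\pi$. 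Your sketch neither fixes the functional nor produces improving permutations for both sign cases; asserting that ``at least one of the candidate $\sigma$'s is feasible'' is not enough.

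There is also a circularity danger in the route through $\mathsf{Q}_{e,\pi(J)^{-1}}$: the clean edge characterization for Bruhat interval polytopes $\mathsf{Q}_{e,w}$ with $w$ Grassmannian is stated in this paper as a \emph{corollary} of Theorem \ref{thm:main}, i.e.\ of the very proposition you are proving; what \cite{TW} and \cite{KW3} supply is only that edges correspond to transpositions (used in Lemma \ref{lem:edge-swap}) and that faces of Bruhat interval polytopes are Bruhat interval polytopes, neither of which yields your obstruction criterion. Your preliminary reduction (ruling out $\pi(m)\in\{i,\ell\}$ from the defining inequalities of $S_J$) is fine but not needed once one argues as above. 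To repair the proof, replace the appeal to \cite{TW} by the supporting-hyperplane argument: fix $\lambda$ exposing the putative edge, deduce $\lambda_a=\lambda_b$, take the minimal non-fixed value $j$ in $(i,\ell)$, and exhibit an element of $S_J$ with larger $\lambda$-value in each of the cases $\lambda_c>\lambda_a$ and $\lambda_c<\lambda_a$ (the equality case also contradicts that the maximum is attained \emph{only} at $\pi,\hat{\pi}$).
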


\begin{proof}
We use a proof by contradiction.  Suppose that not all of 
$i+1, i+2,\dots, \ell-1$ are fixed points. Let $j$ be the smallest
element of $\{i+1,i+2,\dots, \ell-1\}$ which is not a fixed point. 
Let $a, b$, and $c$ be the positions of $i, \ell$, and $j$ in $\pi$.

Without loss of generality, $\pi_c = j > c$.  So $c\in J$
and is a position of a weak excedance in any permutation in $S_J$.
Since we have an edge in the polytope between $\pi$
and $\hat{\pi}$, there exists a $\lambda\in \R^n$ 
such that $\lambda \cdot x: = \sum_h \lambda_h x_h$ (applied to $x\in S_J$)
is maximized  precisely on $\pi$ and $\hat{\pi}$.  In particular,
we must have $\lambda_a = \lambda_b$.

If $\lambda_a = \lambda_b < \lambda_c$, then define
$\tilde{\pi}$ so that it agrees with $\pi$ except in positions $b$
and $c$, where we have $\tilde{\pi}_b = j$ and $\tilde{\pi}_c = \ell$.
Since $\pi_b = \ell$ and $\hat{\pi}_b = i$ and $i<j<\ell$,
permutations in $S_J$ are allowed to have a $b$th coordinate of $j$.
Since $j>c$ and $j<\ell$, we have $\ell>c$.  So permutations in $S_J$
are allowed to have a $c$th coordinate of $\ell$. Therefore
$\tilde{\pi} \in S_J$.  But $\lambda \cdot \tilde{\pi} > 
\lambda \cdot \pi$, which is a contradiction.

If $\lambda_a = \lambda_b > \lambda_c$, then define
$\tilde{\pi}$ so that it agrees with $\pi$ except in positions
$a$ and $c$, where we have $\tilde{\pi}_a = j$ and 
$\tilde{\pi}_c=i$.  Since $\pi_a=i$ and $\hat{\pi}_a=\ell$ and $i<j<\ell$,
permutations in $S_J$ are allowed to have an $a$th coordinate of $j$.
Since $\pi_c = j>c$, and $i+1, i+2,\dots, j-1$ are fixed points
of $\pi$, $c \notin \{i+1, i+2,\dots, j-1\}$.  So $c \leq i$, 
and permutations in $S_J$ are allowed to have a $c$th coordinate of $i$
(recall that $c\in J$ and hence $c$ must be a weak excedance 
in any element of $S_J$).  Therefore $\tilde{\pi} \in S_J$.  
But $\lambda \cdot \tilde{\pi} > \lambda \cdot \pi$, which is a contradiction.
\end{proof}

\begin{theorem}\label{th:edgeexist}
Consider the bridge polytope $\G_J$ and two permutations 
$\pi$ and $\hat{\pi}$ in $S_J \subset S_n$ such that 
$(i \ell) \pi = \hat{\pi}$ and $i+1, i+2, \dots, \ell-1$ are fixed points
of $\pi$ and $\hat{\pi}$.  Choose a vector $\lambda = 
(\lambda_1,\dots, \lambda_n)\in \R^n$ whose coordinates are some 
permutation of the numbers $1, n^2, n^4, \dots, n^{2(n-1)}$, 
and such that
\begin{equation*}
\lambda_{\pi^{-1}(1)} < \lambda_{\pi^{-1}(2)} < \dots < \lambda_{\pi^{-1}(i-1)} 
< 
\Box < \lambda_{\pi^{-1}(i)} = \lambda_{\pi^{-1}(\ell)} < \Box < 
\lambda_{\pi^{-1}(\ell+1)} < \dots < \lambda_{\pi^{-1}(n-1)} < 
\lambda_{\pi^{-1}(n)}.
\end{equation*}
Here the $\Box$ at the left represents the coordinates
$\lambda_{\pi^{-1}(j)}$ for $j\in J$ and $i<j<\ell$, sorted from 
left to right in increasing order of $j$, and the $\Box$ at the right
represents the coordinates
$\lambda_{\pi^{-1}(j)}$ for $j\notin J$ and $i<j<\ell$, sorted from 
left to right in increasing order of $j$.

Then when we calculate the dot product $\lambda \cdot x$
for each $x\in S_J$, $\lambda \cdot x$ is maximized precisely on 
$\pi$ and $\hat{\pi}$.  In particular, there is an edge in 
$\Br_J$ between $\pi$ and $\hat{\pi}$.
\end{theorem}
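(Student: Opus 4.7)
The plan is to verify directly that the linear functional $x \mapsto \lambda \cdot x$ attains its maximum on $S_J$ precisely at $\pi$ and $\hat{\pi}$; the edge claim then follows since a face of $\G_J$ containing exactly two vertices is $1$-dimensional. I would split this into two subclaims: (a) $\lambda \cdot \pi = \lambda \cdot \hat{\pi}$, and (b) $\lambda \cdot x < \lambda \cdot \pi$ for every other $x \in S_J$. Claim (a) is a one-line calculation: $\hat{\pi} = (i\,\ell)\pi$ differs from $\pi$ only by swapping the values $i$ and $\ell$ at positions $\pi^{-1}(i)$ and $\pi^{-1}(\ell)$, and these two positions carry the same $\lambda$-coordinate by construction, so the two contributions cancel out.

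For claim (b), I would list the $n$ positions of $[n]$ as $p_1, p_2, \dots, p_n$ in order of \emph{decreasing} $\lambda$-weight: first the ``top block'' $\pi^{-1}(n), \pi^{-1}(n-1), \dots, \pi^{-1}(\ell+1)$, then the ``middle-high block'' consisting of $\pi^{-1}(j)$ for $j \notin J$ with $i<j<\ell$ taken in decreasing $j$, then the two tied positions $\pi^{-1}(\ell)$ and $\pi^{-1}(i)$, then the ``middle-low block'' $\pi^{-1}(j)$ for $j \in J$ with $i<j<\ell$ in decreasing $j$, and finally the ``bottom block'' $\pi^{-1}(i-1), \dots, \pi^{-1}(1)$. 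The virtue of the weights $1, n^2, \dots, n^{2(n-1)}$ is that consecutive distinct $\lambda$-values differ by a factor of at least $n^2$, which is large enough to guarantee that $\lambda \cdot x$ is a strictly monotone function of the sequence $(x(p_1), \dots, x(p_n))$ in lex order -- with the single caveat that the two tied coordinates enter only through their sum $x(p_s) + x(p_{s+1})$. It therefore suffices to show that $\pi$ is the lex-maximum element of $S_J$ in this adjusted sense, and that $\hat{\pi}$ is the only other element achieving the same lex value; I would do this by a greedy argument running through the blocks in decreasing-weight order.

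The top block is automatic: at each step the largest remaining value $j$ must go to $p_t = \pi^{-1}(j)$ since nothing larger is available, matching $\pi$. The middle-high block is where the fixed-point hypothesis plays its role: since $i+1, \dots, \ell-1$ are fixed points of $\pi$, for $j \notin J$ with $i<j<\ell$ we have $\pi^{-1}(j) = j$, and the $S_J$-constraint at position $j$ is $x_j \le j$, which caps the greedy choice at $j = \pi(j)$. The middle-low block is its symmetric analogue: for $j \in J \cap (i, \ell)$ the constraint becomes $x_j \ge j$, and once the tied pair has consumed $\ell$ no value greater than $j$ remains available in $(i,\ell]$, so processing in decreasing order of $j$ forces $x_j = j$ at each step. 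The bottom block is then automatic by the same largest-remaining-value argument as the top.

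The main obstacle is the tied-pair step, where I need to show that the maximum of $x(\pi^{-1}(\ell)) + x(\pi^{-1}(i))$ over extensions in $S_J$ equals $i+\ell$ and is attained only by the multiset $\{i,\ell\}$. Here is the Hall-type counting argument I would use: once the top and middle-high blocks are greedily fixed, the values still to be placed are exactly $\{1, \dots, i\} \cup (J \cap (i,\ell)) \cup \{\ell\}$, while the middle-low positions are precisely $J \cap (i, \ell)$ (by the fixed-point hypothesis), each requiring a value $\ge j > i$, i.e., a value from the ``large pool'' $(J \cap (i,\ell)) \cup \{\ell\}$, which has only $|J \cap (i,\ell)| + 1$ elements. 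Hence the tied pair can appropriate at most one value from this large pool without leaving the middle-low assignment infeasible, and the best such appropriation is $\ell$ together with the largest small value $i$, giving sum $i+\ell$. Every other feasible assignment at the tied pair gives a strictly smaller sum, and once the multiset $\{i, \ell\}$ is forced, the two possible orderings $(x(\pi^{-1}(\ell)), x(\pi^{-1}(i))) = (\ell, i)$ and $(i, \ell)$ correspond exactly to $\pi$ and $\hat{\pi}$, which completes the proof.
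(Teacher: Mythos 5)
Your proof is correct and follows essentially the same route as the paper: you justify a greedy/lexicographic maximization via the exponential separation of the weights $1, n^2, \dots, n^{2(n-1)}$ and then force the maximizer block by block (top, non-$J$ middle positions, the tied pair, $J$ middle positions, bottom), exactly as in the paper's Steps 1--4. Your explicit treatment of the tied pair through its sum, with the Hall-type count showing the pair can absorb at most one value from $(J\cap(i,\ell))\cup\{\ell\}$, is just a cleaner articulation of the paper's Step 3, where the same conclusion is reached by arguing value-by-value that each $j\in J\cap(i,\ell)$ must remain a fixed point.
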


\begin{proof}
To prove Theorem \ref{th:edgeexist}, we will first use the fact that the coordinates of $\lambda$
are $1, n^2, \dots, n^{2(n-1)}$ to show that to solve for the 
$x\in S_J$ on which $\lambda \cdot x$ is maximized, it suffices
to use the greedy algorithm.  In other words, we will show that 
we can construct
such $x$ by maximizing individual coordinates $x_i$ one by one,
starting with the $x_i$ where $\lambda_i$ is maximal, and continuing
in decreasing order of the values of $\lambda_j$.
Next we will show that when we run the greedy algorithm, we will
get precisely the outcomes $\pi$ and $\hat{\pi}$.

Let us show now that in order to maximize the dot product
$\lambda \cdot x$ for $x \in S_J$, the greedy algorithm works.
Recall that the coordinates of $\lambda$ are the values
$1, n^2, \dots, n^{2(n-1)}$ (in some order).  Let $h \in [n]$ be 
such that $\lambda_h = n^{2(n-1)}$.  We first claim
that we need to maximize $x_h$ subject to the condition $x\in S_J$.
Let $w_h$ be that maximum possible value.
Let $y\in S_J$ be some other permutation with $y_h \leq w_h-1$.
Then $\lambda_h x_h - \lambda_h y_h \geq n^{2(n-1)}$.
And since the maximum difference of $x_j$ and $y_j$ is $n-1$,
we have
$$\sum_{1 \leq j \leq n, j \neq h} \lambda_j y_j - 
\sum_{1 \leq j \leq n, j \neq h} \lambda_j x_j 
\leq (n-1)(1+n^2+ \dots + n^{2(n-2)}).$$
Now note that  since $n-1<n$ and $1+n^2+ \dots + n^{2(n-2)}$ has $n-1$
terms, we have 
$(n-1)(1+n^2+ \dots + n^{2(n-2)}) < n \cdot n \cdot n^{2(n-2)} = n^{2(n-1)}$.
It follows that $\lambda \cdot x > \lambda \cdot y$.

Next let $h' \in [n]$ be such that $\lambda_{h'} = n^{2(n-2)}$.
Using the same argument, and the inequality 
$(n-1)(1+n^2+ \dots +n^{2(n-3)}) < n^{2(n-2)}$, it follows that to maximize
$\lambda \cdot x$, we need to now choose the maximum value of 
$x_{h'}$ subject to the condition $x \in S_J$.  Continuing in this fashion,
we see that this greedy algorithm will compute all $x\in S_J$ such 
that $\lambda \cdot x$ is maximized.

Now we will show that if we use the greedy algorithm, choosing coordinates
$x_j$ for $x\in S_J$ in decreasing order of the corresponding value 
$\lambda_j$, we will get precisely the permutations $\pi$ and $\hat{\pi}$.

{\bf Step 1.}  
Looking at the ordering of the coordinates of $\lambda$ as 
described in Theorem \ref{th:edgeexist}, we first need to maximize
$x_{\pi^{-1}(n)}$, then $x_{\pi^{-1}(n-1)}, \dots,$ then 
$x_{\pi^{-1}(\ell+1)}$.  Therefore we place $n$ in position 
$\pi^{-1}(n)$, $n-1$ in position $\pi^{-1}(n-1)$, $\dots$,
$\ell+1$ in position $\pi^{-1}(\ell+1)$.  (Note that there exist 
permutations $x\in S_J$ with 
these coordinates in these positions -- for example, $\pi$ and $\hat{\pi}$ 
are two examples of such permutations.)

{\bf Step 2.}  Next we need to maximize the values that we put in 
positions $\pi^{-1}(j)$, for 
$i<j<\ell$ and 
$\pi^{-1}(j) = j \notin J$. 
But these are positions of weak non-excedances in $S_J$, so the
best we can do is to put fixed points there.  Note that $\pi$
and $\hat{\pi}$ also have 
fixed points in these positions, so the $x$ we are building so far
agrees with both $\pi$ and $\hat{\pi}$.

{\bf Step 3.}  Now we want to maximize the values that we can 
put in positions $\pi^{-1}(i)$ and $\pi^{-1}(\ell)$.  The greatest
unused value so far is $\ell$ so we can put that in either position
$\pi^{-1}(\ell)$ (agreeing with $\pi$) or $\pi^{-1}(i)$ (agreeing
with $\hat{\pi}$).

Now let $j$ be the maximum value that we have not yet placed in 
any position of the $x$ we are building.  If $i<j<\ell$, then 
necessarily $j\in J$, i.e. $j$ is a position where all permutations
of $S_J$  must have
a weak excedance.  So we must make $j$ a fixed point in $x$ (the only
other option is to place a value smaller than $j$ in position $j$).  
Similarly for any other $i<j<\ell$, we must set $x_j = j$.

Having done this, the maximum value that we have not yet placed in $x$
is $i$.  So we now put $i$ in position $\pi^{-1}(i)$ (or 
position $\pi^{-1}(\ell)$).  Note that the $x$ we are building agrees
with either $\pi$ or $\hat{\pi}$ so far.

{\bf Step 4.}  Finally we want to maximize the values that we place
in positions $\pi^{-1}(i-1),\dots, \pi^{-1}(1)$.  The remaining
unused values are $i-1,\dots, 1$.  So for $i-1 \geq j \geq 1$,
we place $j$ in position
$\pi^{-1}(j)$.  

This greedy algorithm has 
constructed precisely two permutations, $\pi$ and $\hat{\pi}$, and clearly 
$\lambda \cdot \pi = \lambda \cdot \hat{\pi}$.
Therefore if we consider the values $\lambda \cdot x$ for 
$x\in S_J$, $\lambda \cdot x$ is maximized precisely on $\pi$ and 
$\hat{\pi}$.  It follows that there is an edge in $\Br_J$ between
$\pi$ and $\hat{\pi}$.
\end{proof}

\section{The two-dimensional faces of bridge polytopes}

In this section we describe the two-dimensional faces of bridge polytopes,
and explain how they are related to the moves for reduced plabic graphs.

\begin{theorem}
A two-dimensional face of a bridge polytope is either a square, a 
trapezoid, or a regular hexagon, with labels as in Figure \ref{fig:faces}.
\end{theorem}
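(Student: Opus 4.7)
The plan is to combine two structural results already at our disposal. By Lemma \ref{lem:bridge-Bruhat}, the bridge polytope $\Br_J$ is isomorphic to the Bruhat interval polytope $\mathsf{Q}_{e,\pi(J)^{-1}}$, and by the Remark following Lemma \ref{lem:edge-swap} (citing \cite{TW}), every face of a Bruhat interval polytope is itself a Bruhat interval polytope. Consequently every 2-face $F$ of $\Br_J$ is, after transport by the coordinate-permuting isomorphism $\psi$, a 2-dimensional Bruhat interval polytope, and the problem reduces to classifying which 2-dimensional Bruhat interval polytopes can appear inside $\mathsf{Q}_{e,\pi(J)^{-1}}$.

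To carry out the classification, I would fix a 2-face $F$ and a vertex $\pi$ of $F$ that is minimal with respect to some supporting linear functional on $F$. Theorem \ref{thm:main} then tells us that the two edges of $F$ leaving $\pi$ are labeled by transpositions $\tau_1 = (i_1,\ell_1)$ and $\tau_2 = (i_2,\ell_2)$ such that the values strictly between $i_a$ and $\ell_a$ are fixed points of $\pi$ for $a=1,2$. The combinatorial type of $F$ is then controlled by the interaction of the intervals $[i_1,\ell_1]$ and $[i_2,\ell_2]$ together with the $J$-membership of the positions of the corresponding entries.

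The case analysis proceeds as follows. If the two intervals are disjoint, then $\tau_1$ and $\tau_2$ commute and both composite permutations remain in $S_J$, so $F$ is a square with opposite edges labeled $\tau_1$ and $\tau_2$. If the intervals overlap on three consecutive values in a braid-like configuration and every intermediate permutation lies in $S_J$, then applying the transpositions in alternating orders yields six distinct permutations in $S_J$ whose convex hull is a regular hexagon, exactly as in the ordinary permutohedron. The trapezoid case is what remains: the intervals overlap in a way that would naively produce a hexagonal face, but the constraint $x\in S_J$ (via the weak excedance/non-excedance requirements at particular positions) forbids two of the six would-be hexagon vertices from lying in $S_J$, collapsing one pair of opposite edges into a single edge and leaving a trapezoid. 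In each case one then reads off the labels directly from the transposition data and matches them to Figure \ref{fig:faces}.

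The main obstacle is the trapezoid case: rigorously deciding, as a function of $J$ and $\pi$, exactly which of the intermediate permutations $\tau_b\tau_a\pi$ (and their longer compositions) remain in $S_J$. A concrete strategy is to adapt the greedy construction of Theorem \ref{th:edgeexist}: for each candidate 2-face one writes down a weight vector $\lambda$ with coordinates $1, n^2, \dots, n^{2(n-1)}$ arranged according to $\pi,\tau_1,\tau_2$, and then identifies the elements of $S_J$ on which $\lambda \cdot x$ is maximized via the same greedy-by-decreasing-$\lambda_j$ argument. The vertex set so obtained, matched against the three shapes in Figure \ref{fig:faces}, completes the classification.
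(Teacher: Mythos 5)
Your route is genuinely different from the paper's (which never uses the reduction to Bruhat interval polytopes for this theorem: it argues metrically that every edge is parallel to some $e_i-e_\ell$ with length a multiple of $\sqrt{2}$, computes that adjacent edges can only meet at angles $\tfrac{\pi}{3},\tfrac{\pi}{2},\tfrac{2\pi}{3}$, and so narrows the candidates to square, trapezoid, hexagon, triangle, parallelogram, then kills the last two), but as it stands your proposal has real gaps. First, nothing in your argument bounds the shape of the face. Knowing the two edge labels $\tau_1,\tau_2$ at a bottom vertex of $F$ does not determine $F$: you still have to walk around the face and show it closes up after exactly four or six edges, and you have given no mechanism ruling out triangles, parallelograms, pentagons, or longer cycles. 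The paper's key observation here is that traversing any cycle in the one-skeleton multiplies the edge-label transpositions to the identity, which immediately excludes faces with an odd number of sides (parity) and excludes the parallelogram because $(ij)(ik)(ij)(ik)\neq e$; some substitute for this step is missing from your plan. (Citing \cite{TW} that every face is a Bruhat interval polytope does not by itself finish this, since you would then still need a classification of two-dimensional Bruhat interval polytopes, which is essentially the theorem being proved --- indeed the paper's remark derives that classification \emph{from} this proof, not the other way around.)

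Second, the case analysis you do sketch is not carried out. You explicitly flag the trapezoid case as "the main obstacle" and offer only a strategy (adapting the greedy argument of Theorem \ref{th:edgeexist} to a face-selecting functional $\lambda$), without executing it; and even in the square and hexagon cases you assert that the four (resp.\ six) permutations you construct are exactly the vertex set of $F$, which needs justification --- one must check both that the candidate vertices lie in $S_J$ \emph{and} that no further vertices of the polytope lie on the supporting $2$-plane of $F$. Your enumeration of how the two intervals $[i_1,\ell_1]$, $[i_2,\ell_2]$ can interact is also incomplete (e.g.\ intervals sharing an endpoint versus nested versus crossing are not separated, and the fixed-point conditions from Theorem \ref{thm:main} constrain which configurations can occur at all). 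So the proposal is a plausible program, closer in spirit to a "walk around the face and use the edge criterion" argument than to the paper's angle-plus-cycle-product proof, but it is not yet a proof: the shape-bounding step and the trapezoid analysis both need to be supplied.
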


\begin{figure}[h]
\centering
\includegraphics[height=1.2in]{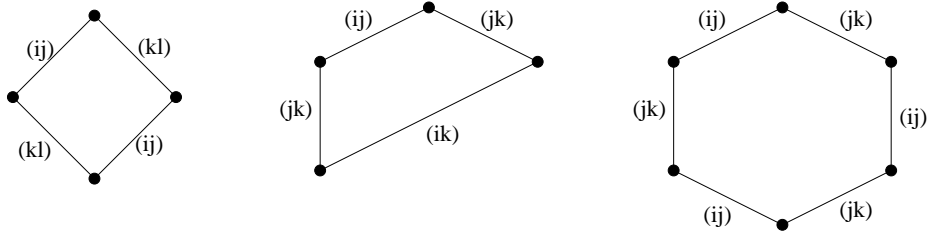}
\caption{The possible two-dimensional faces of a bridge polytope.  
Here we have
$i<j<k<l$ for the square,
$i<j<k$ or $k<j<i$ for the trapezoid, and $i<j<k$ for the hexagon.} \label{fig:faces}
\end{figure}

\begin{proof}
Consider a two-dimensional face of a bridge polytope.  By Lemma \ref{lem:edge-swap},
each edge is parallel to a vector of the form $e_i - e_{\ell}$, and has length
an integer multiple of $\sqrt{2}$.  
A simple calculation with dot products show that the 
possible angles among the edges are $\frac{\pi}{3}, \frac{\pi}{2}, \frac{2\pi}{3}$:
more precisely,
two vectors $e_i-e_j$ and $e_k-e_l$ (where $i, j, k, l$ are distinct) have angle $\frac{\pi}{2}$;
two vectors $e_i-e_j$ and $e_i-e_k$ have angle $\frac{\pi}{3}$;
and two vectors $e_i-e_j$ and $-e_i+e_k$ have angle $\frac{2\pi}{3}$.

The only possibilities for such a polygon are:
a square,
a trapezoid (with angles $\frac{\pi}{3}, \frac{\pi}{3}, \frac{2\pi}{3}, \frac{2\pi}{3}$),
a regular hexagon (all angles are $\frac{2\pi}{3}$),
an equilateral triangle, or a parallelogram.
In the first three cases, the labels on the edges of the polygons must be as 
in Figure \ref{fig:faces}. In the case of the square, it follows from 
the rules of bridge decompositions that the intervals $[i,j]$ and $[k,l]$
must be disjoint, and hence without loss of generality $i<j<k<l$.
(E.g. $k$ cannot lie in between $i$ and $j$, because 
in order to perform the swap $i$ and $j$, all elements between $i$ and $j$ must
be fixed points.
And we are not allowed to swap fixed points.)
A similar argument explains the ordering on $i,j,k$ for the trapezoid and the
hexagon. 

We now argue that it is impossible for a two-dimensional face to be a triangle
or a parallelogram.
Note that if one traverses any cycle in the one-skeleton of a bridge polytope, 
the product of the corresponding edge labels must be $1$.
It follows that there cannot be a two-face with an odd number of sides, because the product of an 
odd number of transpositions is an odd permutation, and hence is never the identity.
Therefore an equilateral triangle is impossible.
Finally consider the case of a face which is a parallelogram.  Its edge labels must 
have alternating labels $(ij)$, $(ik)$, $(ij)$, $(ik)$.  But 
the product $(ij) (ik) (ij) (ik)$ is not equal to the identity permutation.
\end{proof}

\begin{remark}
The same proof shows that the face of any Bruhat interval polytope is a square,
a trapezoid, or a hexagon, as shown
in Figure \ref{fig:faces}.
\end{remark}

\begin{theorem}\label{th:faces-moves}
The three kinds of two-dimensional faces of bridge polytopes correspond to
simple applications of the local moves
for plabic graphs.  More specifically, consider 
a shortest path $p$ from $\pi_{k,n}$ (or more generally $\pi(J)$)
to $e$ in the one-skeleton  of the bridge polytope $\Br_{k,n}$
(or more generally $\Br_J$). Choose a two-dimensional face $F$ such that $p$ traverses
half the sides of $F$, and modify $p$ along $F$, obtaining a new path $p'$ which 
goes around the other sides of $F$.
Then the reduced plabic graphs corresponding to $p$ and $p'$ are related by 
homotopy and by the local moves for plabic graphs as in Figure \ref{fig:faces-moves}.
\end{theorem}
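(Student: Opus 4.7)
The plan is to proceed by case analysis on the three shapes of two-dimensional face (square, trapezoid, hexagon) classified in the preceding theorem. First I would fix a shortest path $p$ from $\pi(J)$ to $e$ in the one-skeleton of $\Br_J$, a two-face $F$ along $p$, and let $p'$ denote the path obtained by re-routing $p$ across $F$. Since only the portion of the bridge decomposition coming from the edges of $F$ differs between $p$ and $p'$, I would factor the plabic graphs $G_p$ and $G_{p'}$ as the same ambient piece (coming from the bridges associated to the edges of $p$ and $p'$ outside of $F$) attached to a local gadget built from the two or three bridges of $F$. It then suffices, for each face shape, to exhibit a sequence of moves (M1)--(M3), together with planar homotopy, that takes one local gadget to the other.

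In the square case, the edge labels are disjoint transpositions $(ij),(k\ell)$ with $i<j<k<\ell$, so the two bridges are attached at disjoint boundary intervals of the disk; stacking them in either order yields gadgets supported in disjoint regions, and the two gadgets are interchanged by planar homotopy alone. In the trapezoid case the four edge labels use only the three indices from Figure \ref{fig:faces}, and two adjacent bridges along one of the parallel sides of $F$ share a boundary endpoint; after performing (M2) at the unicolored edge where the two bridges meet, followed by (M3) to absorb any resulting degree-two vertex, the two-bridge gadget turns into the one-bridge gadget on the opposite parallel side.

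The hexagon case is the main obstacle. Its six edge labels involve only three indices $i<j<k$, and on each half of $\partial F$ the three corresponding bridges together form a small sub-plabic-graph supported on the boundary interval $[i,k]$, built out of six trivalent internal vertices. The strategy is to apply (M2) to contract the unicolored edges joining consecutive bridges, followed by (M3) to remove any newly created degree-two vertex, thereby reducing each three-bridge gadget to a four-vertex square of alternating-colored trivalent vertices. I would then verify by inspection that the two resulting squares are precisely the two configurations exchanged by a single square move (M1). Together with Postnikov's result that (M1)--(M3) generate move-equivalence of reduced plabic graphs, this completes the proof. The square and trapezoid cases are pictorial variants of the same bridge-by-bridge analysis; the real work lies in correctly drawing the two hexagon gadgets and checking the (M2)--(M3)--(M1) reduction between them.
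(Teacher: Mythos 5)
Your proposal is essentially the paper's own proof: the paper simply exhibits the local pictures (Figure \ref{fig:faces-moves}), observing that the square face corresponds to homotopy, the trapezoidal face to moves of type (M3), and the hexagonal face to a combination of (M1) and (M3), with reducedness of the graph guaranteeing the extra (dashed) edge needed for the square move to apply. Two minor points of difference: the paper absorbs your (M2) steps into how the stacked bridges are drawn, and in the trapezoid case both monotone routes around the face consist of two bridges (one parallel side plus one leg each), so the comparison is a two-bridge gadget against a two-bridge gadget rather than against a single bridge.
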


\begin{proof}
The proof of Theorem \ref{th:faces-moves} is illustrated in Figure \ref{fig:faces-moves}.
The two reduced plabic graphs related by a square face in a bridge polytope 
are related by homotopy.
The two plabic graphs related by a trapezoidal face are related by moves of type
(M3).  And the two plabic graphs related by a hexagonal face are related by a combination
of moves (M1) and (M3).  Note that in the latter case, the dashed edge in Figure 
\ref{fig:faces-moves} must be present, or else the plabic graph associated to our bridge
decomposition would not be reduced.
\begin{figure}[h]
\centering
\includegraphics[height=3.5in]{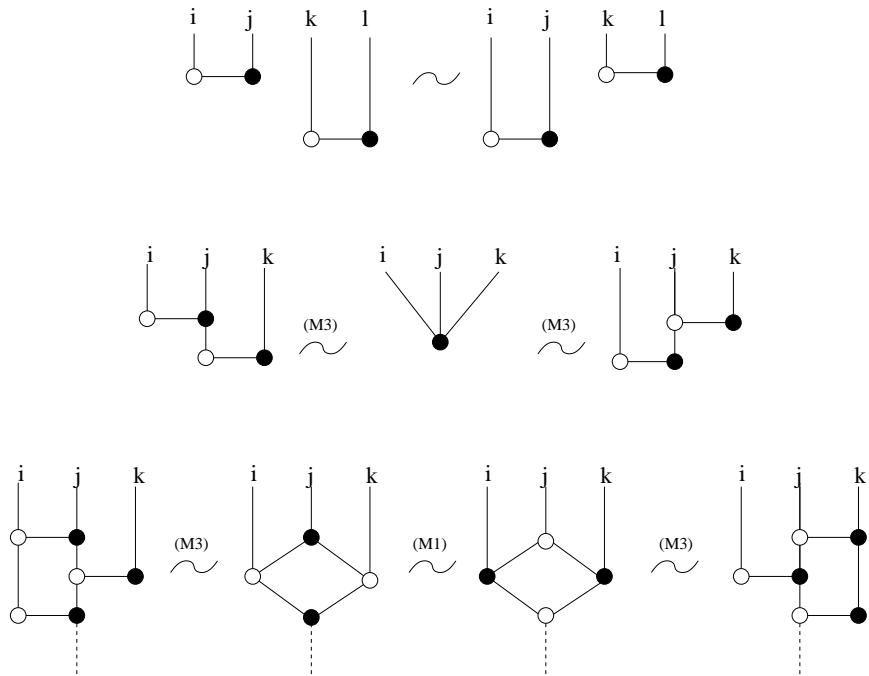}
\caption{A graphical proof of Theorem \ref{th:faces-moves}.}
\label{fig:faces-moves}
\end{figure}
\end{proof}





\bibliographystyle{alpha}
\bibliography{bibliography}



\end{document}